\newtheorem{theorem}{Theorem}[section]
\newtheorem{corollary}{Corollary}[section]
\newtheorem{lemma}{Lemma}[section]
\theoremstyle{definition}
\theoremstyle{remark}
\numberwithin{equation}{section}
\begin{document}

\title[Arithmetic Identities and Congruences for Partition Triples  With 3-cores]
 {Arithmetic Identities and Congruences for \\ Partition Triples With 3-cores}

\author{LIUQUAN WANG}
\address{Department of Mathematics, National University of Singapore, Singapore, 119076, SINGAPORE}

\email{wangliuquan@u.nus.edu; mathlqwang@163.com}

\subjclass[2010]{Primary 05A17; Secondary 11P83}

\keywords{Partitions; congruences; $t$-cores; theta functions; sum of squares}

\dedicatory{}

\maketitle

\begin{abstract}
Let ${{B}_{3}}(n)$ denote the number of partition triples of $n$ where each partition is 3-core. With the help of generating function manipulations, we find several infinite families of arithmetic identities and congruences for ${{B}_{3}}(n)$. Moreover, let $\omega (n)$ denote the number of representations of a nonnegative integer $n$ in the form $x_{1}^{2}+x_{2}^{2}+x_{3}^{2}+3y_{1}^{2}+3y_{2}^{2}+3y_{3}^{2}$ with ${{x}_{1}},{{x}_{2}},{{x}_{3}},{{y}_{1}},{{y}_{2}},{{y}_{3}}\in \mathbb{Z}.$ We find three arithmetic relations between ${{B}_{3}}(n)$ and $\omega (n)$, such as $\omega (6n+5)=4{{B}_{3}}(6n+4).$
\end{abstract}

\section{Introduction}	
A partition of a positive integer $n$ is any nonincreasing sequence of positive integers whose sum is $n$. For example, $7=4+2+1$ and $\lambda =\{4,2,1\}$ is a partition of 7. A partition $\lambda $ of $n$ is said to be a $t$-core if it has no hook numbers that are multiples of $t$. We denote by ${{a}_{t}}(n)$ the number of partitions of $n$ that are $t$-cores. For convenience, we use the following notation
\[{{(a;q)}_{\infty }}=\prod\limits_{n=1}^{\infty }{(1-a{{q}^{n}})}, \quad \text{\rm{and}}  \quad {{f}_{k}}={{({{q}^{k}};{{q}^{k}})}_{\infty }}.\]
From \cite[Eq.\ (2.1)]{Garvan}, the generating function of ${{a}_{t}}(n)$ is given by
\[\sum\limits_{n=0}^{\infty }{{{a}_{t}}(n){{q}^{n}}}=\frac{f_{t}^{t}}{{{f}_{1}}}.\]

	In particular, for $t=3$, we have
\[\sum\limits_{n=0}^{\infty }{{{a}_{3}}(n){{q}^{n}}}=\frac{f_{3}^{3}}{{{f}_{1}}}.\]

A partition $k$-tuple $({{\lambda }_{1}},{{\lambda }_{2}},\cdots ,{{\lambda }_{k}})$ of $n$ is a $k$-tuple of partitions ${{\lambda }_{1}},{{\lambda }_{2}},\cdots ,{{\lambda }_{k}}$ such that the sum of all the parts equals $n$. For example, let ${{\lambda }_{1}}=\{3,1\},{{\lambda }_{2}}=\{1,1\},{{\lambda }_{3}}=\{1\}$. Then $({{\lambda }_{1}},{{\lambda }_{2}},{{\lambda }_{3}})$ is a partition triple of $7$ since $3+1+1+1+1=7$. A partition $k$-tuple of $n$ with $t$-cores is a partition $k$-tuple  $({{\lambda }_{1}},{{\lambda }_{2}},\cdots ,{{\lambda }_{k}})$ of $n$ where each $\lambda _{i}$ is $t$-core for $i=1,2, \cdots ,k$.

Let ${{A}_{t}}(n)$ (resp. ${{B}_{t}}(n)$) denote the number of bipartitions (resp. partition triples) of $n$ with $t$-cores. Then the generating functions for ${{A}_{t}}(n)$ and ${{B}_{t}}(n)$ are given by
\[\sum\limits_{n=0}^{\infty }{{{A}_{t}}(n){{q}^{n}}}=\frac{f_{t}^{2t}}{f_{1}^{2}}\]
and
\begin{equation}\label{Bt}
\sum\limits_{n=0}^{\infty }{{{B}_{t}}(n){{q}^{n}}}=\frac{f_{t}^{3t}}{f_{1}^{3}}
\end{equation}
respectively.

In 1996, Granville and Ono \cite{Ono} found that
\begin{equation}\label{formula}
{{a}_{3}}(n)={{d}_{1,3}}(3n+1)-{{d}_{2,3}}(3n+1),
\end{equation}
where ${{d}_{r,3}}(n)$ denote the number of divisors of $n$ congruent to $r$ modulo 3. Their proof is based on the theory of modular forms.

Baruah and Berndt \cite{Baruah2} showed that for any nonnegative integer $n$,
\[{{a}_{3}}(4n+1)={{a}_{3}}(n).\]

In 2009,  Hirschhorn and Sellers \cite{Hirschhorn} provided an elementary proof of (\ref{formula}) and as corollaries, they proved some arithmetic identities. For example, let $p\equiv 2$ (mod 3) be prime and let $k$ be a positive even integer. Then, for all $n\ge 0$,
\[{{a}_{3}}\Big({{p}^{k}}n+\frac{{{p}^{k}}-1}{3}\Big)={{a}_{3}}(n).\]

Let $u(n)$ denote the number of representations of a nonnegative integer $n$ in the form ${{x}^{2}}+3{{y}^{2}}$ with $x,y\in \mathbb{Z}$. By using Ramanujan's theta function identities, Baruah and Nath \cite{Nath} proved that
\[u(12n+4)=6{{a}_{3}}(n).\]

In 2014, Lin \cite{Lin} discovered some arithmetic identities about ${{A}_{3}}(n)$. For example, he proved that ${{A}_{3}}(8n+6)=7{{A}_{3}}(2n+1)$. Let $v(n)$ denote the number of representations of a nonnegative integer $n$ in the form $x_{1}^{2}+x_{2}^{2}+3y_{1}^{2}+3y_{2}^{2}$ with ${{x}_{1}},{{x}_{2}},{{y}_{1}},{{y}_{2}}\in \mathbb{Z}$. Lin showed that
\begin{equation}\label{A3}
v(6n+5)=12{{A}_{3}}(2n+1).
\end{equation}

Again, Baruah and Nath \cite{BaruahJNT} generalized (\ref{A3}) and established three infinite families of arithmetic identities involving ${{A}_{3}}(n)$. For example, for any integer $k\ge 1$,
\[{{A}_{3}}\bigg({{2}^{2k+2}}n+\frac{2({{2}^{2k+2}}-1)}{3}\bigg)=\frac{{{2}^{2k+2}}-1}{3}\cdot{{A}_{3}}(4n+2)-\frac{{{2}^{2k+2}}-4}{3}\cdot{{A}_{3}}(n).\]
For more results and details about ${{a}_{3}}(n)$ and ${{A}_{3}}(n)$, see \cite{Baruah1,BaruahJNT,Baruah2,Nath,Hirschhorn,Lin}.

Motivated by their work, we study the arithmetic properties of partition triples with 3-cores. By using some identities of $q$ series, we prove some analogous results. We will show that
\[{{B}_{3}}(4n+1)=3{{B}_{3}}(2n), \quad {{B}_{3}}(3n+2)=9{{B}_{3}}(n), \quad \text{\rm{and}}\]
\[{{B}_{3}}(4n+3)=3{{B}_{3}}(2n+1)+4{{B}_{3}}(n).\]
From these relations we deduce three infinite families of arithmetic identities as well as some Ramanujan-type congruences involving ${{B}_{3}}(n)$. For example, we prove two infinite families of congruences for ${{B}_{3}}(n)$: for $k \ge 1$ and all $n \ge 0$,
\begin{displaymath}
\begin{split}
{{B}_{3}}({{2}^{k+1}}n+{{2}^{k}}-1)&\equiv 0 \pmod{\frac{{{4}^{k+1}}+{{(-1)}^{k}}}{5}},\\
{{B}_{3}}({{3}^{k}}n+{{3}^{k}}-1)&\equiv 0 \pmod{{{3}^{2k}}}, \\
{{B}_{3}}({{3}^{k}}n+2\cdot {{3}^{k-1}}-1)&\equiv 0 \pmod{{3}^{2k-1}}.
\end{split}
\end{displaymath}
We will also prove that
\[\sum\limits_{n=0}^{\infty }{{{B}_{3}}(6n+4){{q}^{n}}}=24\frac{f_{2}^{8}f_{3}^{3}}{f_{1}^{5}},\]
from which we deduce the following two Ramanujan-type congruences:
\[{{B}_{3}}(30n+10)\equiv {{B}_{3}}(30n+28)\equiv 0 \pmod{120}.\]

Furthermore, let $\omega (n)$ denote the number of representations of a nonnegative integer $n$ in the form \[n=x_{1}^{2}+x_{2}^{2}+x_{3}^{2}+3y_{1}^{2}+3y_{2}^{2}+3y_{3}^{2}, \quad {{x}_{1}},{{x}_{2}},{{x}_{3}},{{y}_{1}},{{y}_{2}},{{y}_{3}}\in \mathbb{Z}.\]
We find some interesting arithmetic relations between $\omega (n)$ and $B_{3}(n)$:
\begin{displaymath}
\begin{split}
  & \omega (6n+5)=4{{B}_{3}}(6n+4), \\
 & \omega (12n+2)=12{{B}_{3}}(6n), \\
 & \omega (12n+10)=6{{B}_{3}}(6n+4). \\
\end{split}
\end{displaymath}

In the final section, we introduce a unified notation  $A_{3}^{(k)}(n)$ to denote the number of partition $k$-tuples of $n$ wherein each partition is 3-core. We propose two open questions about that whether we can find some analogous results about $A_{3}^{(k)}(n)$ for all positive integer $k$ or not. This will lead to researches in the future.

\section{Main Results and Proofs}
Setting $t=3$ in (\ref{Bt}), we obtain that
\begin{equation}\label{B3}
\sum\limits_{n=0}^{\infty }{{{B}_{3}}(n){{q}^{n}}}=\frac{f_{3}^{9}}{f_{1}^{3}}.
\end{equation}
The following 2-dissection identities will be important in our proofs.
\begin{lemma}\label{13id}
We have
{\setlength\arraycolsep{2pt}
\begin{eqnarray}
\frac{f_{3}^{3}}{{{f}_{1}}}&=&\frac{f_{4}^{3}f_{6}^{2}}{f_{2}^{2}{{f}_{12}}}+q\frac{f_{12}^{3}}{{{f}_{4}}}, \label{lemid1} \\
\frac{{{f}_{3}}}{f_{1}^{3}}&=&\frac{f_{4}^{6}f_{6}^{3}}{f_{2}^{9}f_{12}^{2}}+3q\frac{f_{4}^{2}{{f}_{6}}f_{12}^{2}}{f_{2}^{7}}, \label{lemid2} \\
\frac{f_{1}^{3}}{{{f}_{3}}}&=&\frac{f_{4}^{3}}{{{f}_{12}}}-3q\frac{f_{2}^{2}f_{12}^{3}}{{{f}_{4}}f_{6}^{2}}, \label{lemid3} \\
\frac{{{f}_{1}}}{f_{3}^{3}}&=&\frac{{{f}_{2}}f_{4}^{2}f_{12}^{2}}{f_{6}^{7}}-q\frac{f_{2}^{3}f_{12}^{6}}{f_{4}^{2}f_{6}^{9}}, \label{lemid4} \\
\frac{1}{f_{1}^{4}}&=&\frac{f_{4}^{14}}{f_{2}^{14}f_{8}^{4}}+4q\frac{f_{4}^{2}f_{8}^{4}}{f_{2}^{10}},\label{lemid05} \\
\frac{1}{f_{1}^{8}}&=&\frac{f_{4}^{28}}{f_{2}^{28}f_{8}^{8}}+8q\frac{f_{4}^{16}}{f_{2}^{24}}+16{{q}^{2}}\frac{f_{4}^{4}f_{8}^{8}}{f_{2}^{20}}. \label{lemid5}
\end{eqnarray}}
\end{lemma}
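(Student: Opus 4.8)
The plan is to peel the six dissections down to two genuinely new ones and to obtain the remaining four by formal manipulation. The two seeds I would establish first are \eqref{lemid1} and \eqref{lemid3}, the $2$-dissections of $f_3^3/f_1$ and of $f_1^3/f_3$; both are classical (they, or equivalent forms, occur in the $3$-core work of Hirschhorn--Sellers and of Baruah--Nath), and I would either quote them or reprove them from the standard $2$-dissections of Ramanujan's theta functions $\varphi(q)=\sum_{n}q^{n^2}$ and $\psi(q)=\sum_{n\ge 0}q^{n(n+1)/2}$. One half of each seed is essentially free: the even part of $f_1^3/f_3$ is $(q^4;q^4)_\infty^3/(q^{12};q^{12})_\infty=f_4^3/f_{12}$ (in the theta series $\sum_{m,n\in\mathbb Z}\zeta^{m-n}q^{m^2+mn+n^2}$ for $f_1^3/f_3$, with $\zeta$ a primitive cube root of unity, the even-exponent terms are exactly those with $m,n$ both even), and the odd part of $f_3^3/f_1$ is $q\,f_{12}^3/f_4$ because $a_3(4n+1)=a_3(n)$ and $a_3(4n+3)=0$ (the latter following from \eqref{formula}). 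Pinning down the two remaining halves — in effect, finding closed forms for $\sum_{n}a_3(2n)q^{n}$ and its companion — is the one step requiring a genuine idea, and I expect it to be the main obstacle; it can be dealt with either through the $\varphi$/$\psi$ dissection machinery or via the Granville--Ono formula \eqref{formula}.

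Granting \eqref{lemid1} and \eqref{lemid3}, the ``conjugate'' identities \eqref{lemid4} and \eqref{lemid2} fall out under the substitution $q\mapsto -q$. Write \eqref{lemid1} as $f_3^3/f_1=P(q^2)+qR(q^2)$ with $P=f_4^3f_6^2/(f_2^2f_{12})$ and $R=f_{12}^3/f_4$. Since $(-q;-q)_\infty=f_2^3/(f_1f_4)$ and $(-q^3;-q^3)_\infty=f_6^3/(f_3f_{12})$, the value of $f_3^3/f_1$ at $-q$ is $P(q^2)-qR(q^2)=f_1f_4f_6^9/(f_2^3f_3^3f_{12}^3)$; multiplying by \eqref{lemid1} gives $P^2-q^2R^2=f_4f_6^9/(f_2^3f_{12}^3)$, a power series in $q^2$. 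Hence
\[\frac{f_1}{f_3^3}=\frac{P(q^2)-qR(q^2)}{P^2-q^2R^2}=\frac{f_2^3f_{12}^3}{f_4f_6^9}\left(\frac{f_4^3f_6^2}{f_2^2f_{12}}-q\,\frac{f_{12}^3}{f_4}\right),\]
and expanding produces \eqref{lemid4}. The same computation applied to \eqref{lemid3} — writing $f_1^3/f_3=U(q^2)-3qV(q^2)$ and noting that $U^2-9q^2V^2=f_2^9f_{12}/(f_4^3f_6^3)$ is a power series in $q^2$ — gives \eqref{lemid2}.

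For \eqref{lemid05} I would use $\varphi(-q)=f_1^2/f_2$ to write $1/f_1^4=1/(f_2^2\varphi(-q)^2)$, replace $\varphi(-q)^2$ by $\varphi(-q^2)^4/\varphi(q)^2$ via $\varphi(q)\varphi(-q)=\varphi(-q^2)^2$, and then split $\varphi(q)^2=\varphi(q^2)^2+4q\psi(q^4)^2$. This is already a $2$-dissection,
\[\frac{1}{f_1^4}=\frac{\varphi(q^2)^2}{f_2^2\varphi(-q^2)^4}+4q\,\frac{\psi(q^4)^2}{f_2^2\varphi(-q^2)^4},\]
and inserting $\varphi(q^2)=f_4^5/(f_2^2f_8^2)$, $\varphi(-q^2)=f_2^2/f_4$, $\psi(q^4)=f_8^2/f_4$ and cancelling produces \eqref{lemid05} exactly. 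Finally \eqref{lemid5} is just the square of \eqref{lemid05}: squaring its right-hand side, the cross term collapses to $8q\,f_4^{16}/f_2^{24}$ and the last term to $16q^2\,f_4^4f_8^8/f_2^{20}$, so the three summands of \eqref{lemid5} appear verbatim. Everything in these two paragraphs is routine bookkeeping with Euler products; only the seeds \eqref{lemid1} and \eqref{lemid3} genuinely require thought.
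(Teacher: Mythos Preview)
Your proposal is correct and, where the arguments overlap, matches the paper: the paper also obtains \eqref{lemid5} by squaring \eqref{lemid05}. The difference is that for \eqref{lemid1}--\eqref{lemid05} the paper simply cites the literature --- \eqref{lemid1} and \eqref{lemid2} from Xia--Yao, \eqref{lemid3} and \eqref{lemid4} from Hirschhorn--Garvan--Borwein, and \eqref{lemid05} from Xia--Yao again --- whereas you build everything from two seeds. Your $q\mapsto -q$ conjugation trick (using $(-q;-q)_\infty=f_2^3/(f_1f_4)$ to get $P^2-q^2R^2$ and $U^2-9q^2V^2$ as explicit eta-quotients in $q^2$, then inverting) is a clean way to pass from \eqref{lemid1}, \eqref{lemid3} to \eqref{lemid4}, \eqref{lemid2}; I checked the products $f_4f_6^{9}/(f_2^{3}f_{12}^{3})$ and $f_2^{9}f_{12}/(f_4^{3}f_6^{3})$ and the resulting dissections agree term by term with \eqref{lemid4} and \eqref{lemid2}. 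Your derivation of \eqref{lemid05} via $\varphi(q)\varphi(-q)=\varphi(-q^2)^2$ and $\varphi(q)^2=\varphi(q^2)^2+4q\psi(q^4)^2$ is also correct after substituting the product forms. So both approaches work: the paper's is shorter (pure citation), while yours is more self-contained and exhibits the pairing $(\eqref{lemid1},\eqref{lemid4})$, $(\eqref{lemid3},\eqref{lemid2})$ explicitly. The one place you still defer to outside input is the ``hard halves'' of the two seeds; if you want the argument fully self-contained you will indeed need one genuine identity there (e.g.\ the Borwein $b(q)$ dissection), but for the purposes of this lemma a citation at that point is no worse than what the paper does.
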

\begin{proof}
For the proofs of (\ref{lemid1})--(\ref{lemid2}), see \cite[Eq.\ (3.75) and Eq.\ (3.38)]{Xia}.
The proofs of (\ref{lemid3})--(\ref{lemid4}) can be found in \cite{Garvan}.
For a proof of (\ref{lemid05}), see \cite[Eq.\ (2.11)]{Xia}. (\ref{lemid5}) follows by squaring both sides of (\ref{lemid05}).
\end{proof}

\begin{lemma}
We have
{\setlength\arraycolsep{2pt}
\begin{eqnarray}
\frac{f_{2}^{8}f_{3}^{4}}{f_{1}^{4}}&=&\frac{f_{2}^{3}f_{3}^{9}}{f_{1}^{3}{{f}_{6}}}+qf_{6}^{8}, \label{thm1id1} \\
\frac{f_{2}^{5}f_{3}^{4}{{f}_{6}}}{f_{1}^{4}} &=& \frac{f_{3}^{9}}{f_{1}^{3}}+q\frac{f_{6}^{9}}{f_{2}^{3}}. \label{thm1id2}
\end{eqnarray}}
\end{lemma}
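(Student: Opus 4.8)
The plan is to derive both identities from the 2-dissections collected in Lemma~\ref{13id}, after first noting that (\ref{thm1id1}) and (\ref{thm1id2}) are equivalent. Multiplying (\ref{thm1id1}) through by $f_6/f_2^3$ turns $\frac{f_2^8 f_3^4}{f_1^4}$ into $\frac{f_2^5 f_3^4 f_6}{f_1^4}$, turns $\frac{f_2^3 f_3^9}{f_1^3 f_6}$ into $\frac{f_3^9}{f_1^3}$, and turns $q f_6^8$ into $q\frac{f_6^9}{f_2^3}$; this is precisely (\ref{thm1id2}). So it suffices to establish (\ref{thm1id1}).

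To prove (\ref{thm1id1}) I would multiply both sides by $f_1/f_3^3$. This is the key move, and it is essentially forced by the identity $\frac{f_3^9}{f_1^3}\cdot\frac{f_1}{f_3^3}=\bigl(\frac{f_3^3}{f_1}\bigr)^{2}$: the awkward ninth-power term becomes the square of a quotient that is dissected in (\ref{lemid1}), the left side becomes $f_2^8\cdot\frac{f_3}{f_1^3}$, which is dissected by (\ref{lemid2}), and the remaining term becomes $q f_6^8\cdot\frac{f_1}{f_3^3}$, which is dissected by (\ref{lemid4}). Thus (\ref{thm1id1}) is equivalent to
\[ f_2^8\cdot\frac{f_3}{f_1^3}=\frac{f_2^3}{f_6}\left(\frac{f_3^3}{f_1}\right)^{2}+q f_6^8\cdot\frac{f_1}{f_3^3}, \]
and every factor appearing there is a theta quotient whose 2-dissection is already in hand.

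The final step is to substitute (\ref{lemid2}), the square of (\ref{lemid1}), and (\ref{lemid4}) into this last identity and simplify. A direct computation will show that the left side collapses to $\frac{f_4^6 f_6^3}{f_2 f_{12}^2}+3q\,f_2 f_4^2 f_6 f_{12}^2$, that $\frac{f_2^3}{f_6}\bigl(\frac{f_3^3}{f_1}\bigr)^2$ equals $\frac{f_4^6 f_6^3}{f_2 f_{12}^2}+2q\,f_2 f_4^2 f_6 f_{12}^2+q^2\frac{f_2^3 f_{12}^6}{f_4^2 f_6}$, and that $q f_6^8\cdot\frac{f_1}{f_3^3}$ equals $q\,f_2 f_4^2 f_6 f_{12}^2-q^2\frac{f_2^3 f_{12}^6}{f_4^2 f_6}$; the two $q^2$-contributions on the right cancel, the two sides agree, and (\ref{thm1id1}) follows, and with it (\ref{thm1id2}). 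I do not expect any real obstacle here: once the multiplier $f_1/f_3^3$ has been identified, what is left is routine cancellation of eta-quotients in $q^2$, so the only thing to watch is the bookkeeping. As a sanity check one can verify that both sides of (\ref{thm1id1}) have $q$-expansion beginning $1+4q+6q^2+4q^3-3q^4+\cdots$.
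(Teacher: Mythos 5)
Your proof is correct, but it takes a genuinely different route from the paper. You first observe that (\ref{thm1id2}) is just (\ref{thm1id1}) multiplied by $f_6/f_2^3$, and then prove (\ref{thm1id1}) by multiplying it by $f_1/f_3^3$, so that every factor has a 2-dissection already recorded in Lemma \ref{13id}: substituting (\ref{lemid2}), the square of (\ref{lemid1}), and (\ref{lemid4}) and cancelling the two $q^2$-terms does indeed reduce both sides to $\frac{f_4^6 f_6^3}{f_2 f_{12}^2}+3q f_2 f_4^2 f_6 f_{12}^2$ (I checked your intermediate expressions and they are right). The paper instead proves (\ref{thm1id1}) by clearing denominators, rewriting everything as products $[\,\cdot\,;q^6]_\infty$, and invoking the theta addition formula (\ref{keyid}) from Gasper--Rahman with $(x,\lambda,\mu,v,q)\to(q^3,q,q^2,1,q^6)$; it then obtains (\ref{thm1id2}) from (\ref{thm1id1}) by substituting the dissections (\ref{lemid1})--(\ref{lemid2}), extracting an identity in $q^2$, replacing $q^2$ by $q$, and multiplying by $f_3/f_1^3$. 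Your argument is more mechanical and entirely self-contained given Lemma \ref{13id}, and your $f_6/f_2^3$ observation shortcuts the paper's somewhat roundabout derivation of (\ref{thm1id2}); the paper's treatment of (\ref{thm1id1}) has the merit of being independent of the 2-dissections and of exhibiting the identity as a direct specialization of the classical Weierstrass-type three-term theta relation. Since (\ref{lemid1}), (\ref{lemid2}), (\ref{lemid4}) are quoted from the literature and do not depend on this lemma, there is no circularity in your approach.
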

\begin{proof}
For convenience, we introduce the following notation
\[ {{[x;q]}_{\infty }}={{(x;q)}_{\infty }}{{(q/x;q)}_{\infty }}, \]
\[{{[{{a}_{1}},\cdots ,{{a}_{n}};q]}_{\infty }}=\prod\limits_{i=1}^{n}{{{[{{a}_{i}};q]}_{\infty }}}. \]
Multiplying both sides of (\ref{thm1id1}) by $f_{1}^{4}{{f}_{6}}$, we know (\ref{thm1id1}) is equivalent to
\begin{equation}\label{mid}
f_{2}^{8}f_{3}^{4}{{f}_{6}}={{f}_{1}}f_{2}^{3}f_{3}^{9}+qf_{1}^{4}f_{6}^{9}.
\end{equation}
Note that
\begin{displaymath}
\begin{split}
  & {{f}_{1}}={{[q,{{q}^{2}};{{q}^{6}}]}_{\infty }}{{({{q}^{3}};{{q}^{6}})}_{\infty }}{{({{q}^{6}};{{q}^{6}})}_{\infty }}, \quad {{f}_{2}}={{[{{q}^{2}};{{q}^{6}}]}_{\infty }}{{({{q}^{6}};{{q}^{6}})}_{\infty }}, \\
 & {{f}_{3}}={{({{q}^{3}};{{q}^{6}})}_{\infty }}{{({{q}^{6}};{{q}^{6}})}_{\infty }}, \quad \quad \quad \quad \quad \quad {{f}_{6}}={{({{q}^{6}};{{q}^{6}})}_{\infty }}. \\
\end{split}
\end{displaymath}
Substituting these expressions into (\ref{mid}) and simplifying, we know (\ref{mid}) is equivalent to
\begin{equation}\label{equi}
[{{q}^{2}};{{q}^{6}}]_{\infty }^{4}={{[q,{{q}^{3}},{{q}^{3}},{{q}^{3}};{{q}^{6}}]}_{\infty }}+q[q;{{q}^{6}}]_{\infty }^{4}.
\end{equation}
From \cite[Exercise 2.16,  p. 61]{Gasper}, we know
\begin{equation}\label{keyid}
{{[x\lambda ,x/\lambda ,\mu v,\mu /v;q]}_{\infty }}={{[xv,x/v,\lambda \mu ,\mu /\lambda ;q]}_{\infty }}+\frac{\mu }{\lambda }{{[x\mu ,x/\mu ,\lambda v,\lambda /v;q]}_{\infty }}.
\end{equation}
Taking $(x,\lambda ,\mu ,v,q)\to ({{q}^{3}},q,{{q}^{2}},1,{{q}^{6}})$ in (\ref{keyid}), we have
\[{{[{{q}^{4}},{{q}^{2}},{{q}^{2}},{{q}^{2}};{{q}^{6}}]}_{\infty }}={{[q,{{q}^{3}},{{q}^{3}},{{q}^{3}};{{q}^{6}}]}_{\infty }}+q{{[{{q}^{5}},q,q,q;{{q}^{6}}]}_{\infty }}.\]
Hence (\ref{equi}) holds and we complete our proof of (\ref{thm1id1}).

From (\ref{lemid1}) and (\ref{lemid2}), we obtain that
\begin{equation}\label{f134}
\frac{f_{3}^{4}}{f_{1}^{4}}=\frac{f_{3}^{3}}{{{f}_{1}}}\cdot \frac{{{f}_{3}}}{f_{1}^{3}}=\frac{f_{4}^{9}f_{6}^{5}}{f_{2}^{11}f_{12}^{3}}+3{{q}^{2}}\frac{{{f}_{4}}{{f}_{6}}f_{12}^{5}}{f_{2}^{7}}+4q\frac{f_{4}^{5}f_{6}^{3}{{f}_{12}}}{f_{2}^{9}}.
\end{equation}
Multiplying both sides by $f_{2}^{8}$, we get
\begin{equation}\label{term1}
\frac{f_{2}^{8}f_{3}^{4}}{f_{1}^{4}}=\frac{f_{4}^{9}f_{6}^{5}}{f_{2}^{3}f_{12}^{3}}+4q\frac{f_{4}^{5}f_{6}^{3}{{f}_{12}}}{{{f}_{2}}}+3{{q}^{2}}{{f}_{2}}{{f}_{4}}{{f}_{6}}f_{12}^{5}.
\end{equation}
Applying (\ref{lemid1}), we obtain that
\begin{equation}\label{term2}
\frac{f_{2}^{3}f_{3}^{9}}{f_{1}^{3}{{f}_{6}}}=\frac{f_{2}^{3}}{{{f}_{6}}}\cdot {{\Big(\frac{f_{3}^{3}}{{{f}_{1}}}\Big)}^{3}}=\frac{f_{4}^{9}f_{6}^{5}}{f_{2}^{3}f_{12}^{3}}+3q\frac{f_{4}^{5}f_{6}^{3}{{f}_{12}}}{{{f}_{2}}}+3{{q}^{2}}{{f}_{2}}{{f}_{4}}f_{6}f_{12}^{5}+{{q}^{3}}\frac{f_{2}^{3}f_{12}^{9}}{f_{4}^{3}{{f}_{6}}}.
\end{equation}
Substituting (\ref{term1}) and (\ref{term2}) into (\ref{thm1id1}), we deduce that
\[\frac{f_{4}^{5}f_{6}^{3}{{f}_{12}}}{{{f}_{2}}}=f_{6}^{8}+{{q}^{2}}\frac{f_{2}^{3}f_{12}^{9}}{f_{4}^{3}{{f}_{6}}}.\]
Replacing ${{q}^{2}}$ by $q$ and then multiplying both sides by $\frac{f_{3}}{f_{1}^{3}}$, we obtain (\ref{thm1id2}).
\end{proof}
\begin{theorem}
For any integer $k\ge 1$, we have
\begin{equation}\label{re1}
{{B}_{3}}\big({{2}^{k+1}}n+{{2}^{k}}-1\big)=\frac{{{2}^{2k+2}}+{{(-1)}^{k}}}{5}\cdot{{B}_{3}}(2n),
\end{equation}
\begin{equation}\label{re2}
{{B}_{3}}\big({{2}^{k+1}}n+{{2}^{k+1}}-1\big)=\frac{{{2}^{2k+2}}+{{(-1)}^{k}}}{5}\cdot{{B}_{3}}(2n+1)+\frac{{{2}^{2k+2}}-4{{(-1)}^{k}}}{5}\cdot{{B}_{3}}(n).
\end{equation}
\end{theorem}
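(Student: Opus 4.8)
The plan is to extract the two ``seed'' relations
\[
B_3(4n+1)=3B_3(2n), \qquad B_3(4n+3)=3B_3(2n+1)+4B_3(n),
\]
which are exactly the $k=1$ instances of \eqref{re1} and \eqref{re2}, and then to propagate them to every $k\ge 1$ by a second-order linear recurrence in $k$. To obtain the seeds I would $2$-dissect the series $\sum_{n\ge 0}B_3(n)q^n=f_3^9/f_1^3$ of \eqref{B3}. Starting from \eqref{thm1id2}, namely $f_2^5f_3^4f_6/f_1^4=f_3^9/f_1^3+q\,f_6^9/f_2^3$, and using \eqref{f134} to expand $f_3^4/f_1^4$, one writes $f_2^5f_3^4f_6/f_1^4$ as a sum of three products of $f_2,f_4,f_6,f_{12}$ carrying the prefactors $1$, $4q$ and $3q^2$. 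Since each such product, and also $f_6^9/f_2^3$, is a power series in $q^2$, this exhibits the $2$-dissection of $f_3^9/f_1^3$; taking its odd part, dividing by $q$, replacing $q^2$ by $q$, and substituting back via \eqref{thm1id2} gives
\[
\sum_{n\ge 0}B_3(2n+1)q^n=3\,\frac{f_3^9}{f_1^3}+4q\,\frac{f_6^9}{f_2^3}=\sum_{m\ge 0}3B_3(m)q^m+\sum_{m\ge 0}4B_3(m)q^{2m+1}.
\]
Comparing coefficients of $q^{2j}$ and of $q^{2j+1}$ yields the two seed relations; if these are already recorded they may simply be cited.

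With the seeds in hand, fix $n\ge 0$ and for $k\ge 0$ set $T_k=B_3(2^{k+1}n+2^k-1)$ and $S_k=B_3(2^{k+1}n+2^{k+1}-1)$. For $k\ge 2$ we have
\[
2^{k+1}n+2^k-1=4\bigl(2^{k-2}(2n+1)-1\bigr)+3, \qquad 2^{k+1}n+2^{k+1}-1=4\bigl(2^{k-1}(n+1)-1\bigr)+3,
\]
with both bracketed quantities nonnegative integers. Applying the seed relation $B_3(4m+3)=3B_3(2m+1)+4B_3(m)$ with $m=2^{k-2}(2n+1)-1$ and then with $m=2^{k-1}(n+1)-1$, and simplifying the arguments, gives
\[
T_k=3T_{k-1}+4T_{k-2}, \qquad S_k=3S_{k-1}+4S_{k-2} \qquad (k\ge 2).
\]
Both sequences satisfy the recurrence with characteristic polynomial $x^2-3x-4=(x-4)(x+1)$, whose roots are $4$ and $-1$. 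The initial data are $T_0=B_3(2n)$, $T_1=3B_3(2n)$, $S_0=B_3(2n+1)$ and $S_1=3B_3(2n+1)+4B_3(n)$, the last two again by the seeds. Solving the recurrence gives
\[
T_k=\frac{4\cdot 4^k+(-1)^k}{5}B_3(2n), \qquad S_k=\frac{4\cdot 4^k+(-1)^k}{5}B_3(2n+1)+\frac{4\cdot 4^k-4(-1)^k}{5}B_3(n),
\]
and since $4\cdot 4^k=2^{2k+2}$ these are precisely \eqref{re1} and \eqref{re2}.

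One point needs care: the rewriting ``$N=4(\cdot)+3$'' in the recurrence step is valid only because, for $k\ge 2$, the quantities $2^{k-2}(2n+1)$ and $2^{k-1}(n+1)$ are even, so $k=1$ must genuinely be the base case and the seed relations are indispensable. I expect the main obstacle to be the $2$-dissection that produces the seeds: one has to expand $f_2^5f_3^4f_6/f_1^4$ accurately, keep track of which monomials contribute to even versus odd powers of $q$, and tidy the $f_k$-quotients after $q^2\mapsto q$. Everything afterwards — recognizing the two closed forms and checking that they satisfy the recurrence with the stated initial values — is routine.
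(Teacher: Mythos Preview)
Your proposal is correct and follows essentially the same route as the paper: derive the two seed relations $B_3(4n+1)=3B_3(2n)$ and $B_3(4n+3)=3B_3(2n+1)+4B_3(n)$ from the $2$-dissection of $f_3^9/f_1^3$ (the paper cubes \eqref{lemid1} directly rather than going through \eqref{f134}, but both land on $\sum B_3(2n+1)q^n=3f_3^9/f_1^3+4qf_6^9/f_2^3$), and then iterate. Your packaging of the iteration as the second-order recurrence $X_k=3X_{k-1}+4X_{k-2}$ with roots $4,-1$ is in fact more explicit than the paper's ``now the proof follows by mathematical induction''; one small slip to fix is your parenthetical justification---what you need for $k\ge 2$ is not that $2^{k-2}(2n+1)$ and $2^{k-1}(n+1)$ are \emph{even} (for $k=2$ the first is odd), but merely that the bracketed quantities $2^{k-2}(2n+1)-1$ and $2^{k-1}(n+1)-1$ are nonnegative integers.
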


\begin{proof}
Substituting (\ref{lemid1}) into (\ref{B3}), we obtain that
\begin{displaymath}
\begin{split}
   \sum\limits_{n=0}^{\infty }{{{B}_{3}}(n){{q}^{n}}}&={{\Big(\frac{f_{4}^{3}f_{6}^{2}}{f_{2}^{2}{{f}_{12}}}+q\frac{f_{12}^{3}}{{{f}_{4}}}\Big)}^{3}} \\
 & =\Big(\frac{f_{4}^{9}f_{6}^{6}}{f_{2}^{6}f_{12}^{3}}+3{{q}^{2}}\frac{f_{4}f_{6}^{2}f_{12}^{5}}{f_{2}^{2}}\Big)+q\Big(3\frac{f_{4}^{5}f_{6}^{4}f_{12}}{f_{2}^{4}}+{{q}^{2}}\frac{f_{12}^{9}}{f_{4}^{3}}\Big). \\
\end{split}
\end{displaymath}
Extracting the terms involving ${{q}^{2n}}$ and ${{q}^{2n+1}}$, respectively, we get
\begin{equation}\label{B32n}
\sum\limits_{n=0}^{\infty }{{{B}_{3}}(2n){{q}^{n}}}=\frac{f_{2}^{9}f_{3}^{6}}{f_{1}^{6}f_{6}^{3}}+3q\frac{{{f}_{2}}f_{3}^{2}f_{6}^{5}}{f_{1}^{2}},
\end{equation}
and
\begin{equation}\label{B32n1}
\sum\limits_{n=0}^{\infty }{{{B}_{3}}(2n+1){{q}^{n}}}=3\frac{f_{2}^{5}f_{3}^{4}{{f}_{6}}}{f_{1}^{4}}+q\frac{f_{6}^{9}}{f_{2}^{3}}.
\end{equation}
By (\ref{thm1id2}), we deduce that
\begin{displaymath}
\begin{split}
\sum\limits_{n=0}^{\infty }{{{B}_{3}}(2n+1){{q}^{n}}} & =3\frac{f_{3}^{9}}{f_{1}^{3}}+4q\frac{f_{6}^{9}}{f_{2}^{3}} \\
 & =3\sum\limits_{n=0}^{\infty }{{{B}_{3}}(n){{q}^{n}}}+4\sum\limits_{n=0}^{\infty }{{{B}_{3}}(n){{q}^{2n+1}}}. \\
\end{split}
\end{displaymath}
Equating the coefficients of ${{q}^{2n}}$ and ${{q}^{2n+1}}$ on both sides, respectively, we obtain
\begin{equation}\label{rec1}
{{B}_{3}}(4n+1)=3{{B}_{3}}(2n)
\end{equation}
and
\begin{equation}\label{rec2}
{{B}_{3}}(4n+3)=3{{B}_{3}}(2n+1)+4{{B}_{3}}(n).
\end{equation}

We are now able to prove  (\ref{re1})--(\ref{re2}). Note that (\ref{rec1}) and (\ref{rec2}) are (\ref{re1}) and (\ref{re2}), respectively, for $k=1$. Now we prove (\ref{re1}). Replacing $n$ by $2n$ in (\ref{rec2}), we have
\[{{B}_{3}}(8n+3)=3{{B}_{3}}(4n+1)+4{{B}_{3}}(2n).\]
By (\ref{rec1}), this implies
\[{{B}_{3}}(8n+3)=13{{B}_{3}}(2n),\]
which is (\ref{re1}) for $k=2$. Now the proof of (\ref{re1}) follows by mathematical induction.

Next, replacing $n$ by $2n+1$ in (\ref{rec2}), we have
\[{{B}_{3}}(8n+7)=3{{B}_{3}}(4n+3)+4{{B}_{3}}(2n+1).\]
Employing (\ref{rec2}) in the above, we deduce that
\[{{B}_{3}}(8n+7)=13{{B}_{3}}(2n+1)+12{{B}_{3}}(n),\]
which is (\ref{re2}) for $k=2$. Now the proof of (\ref{re2}) can be completed by mathematical induction.
\end{proof}

\begin{corollary}\label{cor1}
For any integer $k\ge 1$, we have
\[{{B}_{3}}\big({{2}^{k+1}}n+{{2}^{k}}-1\big)\equiv 0 \pmod{\frac{{{2}^{2k+2}}+{{(-1)}^{k}}}{5}}.\]
\end{corollary}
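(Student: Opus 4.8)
The plan is to read the congruence off directly from the identity \eqref{re1}, which has already been proved in the theorem above; the only point requiring any argument is that the claimed modulus is an honest positive integer, after which the congruence is immediate from the fact that $B_3$ takes nonnegative integer values.

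First I would check the integrality of $N_k := \dfrac{2^{2k+2}+(-1)^k}{5}$. Writing $2^{2k+2}=4^{k+1}$ and using $4\equiv -1\pmod 5$, we get $4^{k+1}\equiv (-1)^{k+1}\pmod 5$, hence
\[
2^{2k+2}+(-1)^k \equiv (-1)^{k+1}+(-1)^k = 0 \pmod 5 .
\]
Thus $5 \mid 2^{2k+2}+(-1)^k$, and since $2^{2k+2}>1$ the quotient $N_k$ is a positive integer for every $k\ge 1$.

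Next I would simply invoke \eqref{re1}: for all $n\ge 0$,
\[
B_3\big(2^{k+1}n+2^k-1\big) = N_k\cdot B_3(2n).
\]
Because $B_3(2n)$ is a nonnegative integer by its combinatorial definition (it counts partition triples of $2n$ in which each component is a $3$-core), the right-hand side is an integer multiple of $N_k$. Therefore $B_3\big(2^{k+1}n+2^k-1\big)\equiv 0 \pmod{N_k}$, which is precisely the assertion of the corollary.

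There is no real obstacle here: all of the substantive work — the $2$-dissections of Lemma \ref{13id}, the identity \eqref{thm1id2}, the extraction of \eqref{B32n1}, and the induction yielding \eqref{re1} — has already been carried out in proving the theorem. The corollary is a formal consequence, and the sole elementary fact to record along the way is the divisibility $5\mid 2^{2k+2}+(-1)^k$ verified above.
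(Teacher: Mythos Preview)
Your proposal is correct and matches the paper's approach exactly: the paper states Corollary~\ref{cor1} without a separate proof, treating it as an immediate consequence of \eqref{re1}, and your argument simply makes this explicit while also verifying that the modulus $\frac{2^{2k+2}+(-1)^k}{5}$ is indeed a positive integer.
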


Recall that the general Ramanujan's theta function $f(a,b)$ is defined by
\[f(a,b):=\sum\limits_{n=-\infty }^{\infty }{{{a}^{n(n+1)/2}}{{b}^{n(n-1)/2}}},\quad |ab|<1.\]
As some special cases, we have (see \cite{Berndt}, for example)
{\setlength\arraycolsep{2pt}
\begin{eqnarray}
   \varphi (q)&:=&f(q,q)=\sum\limits_{n=-\infty }^{\infty }{{{q}^{{{n}^{2}}}}}=(-q;{{q}^{2}})_{\infty }^{2}{{({{q}^{2}};{{q}^{2}})}_{\infty }}, \label{phi} \\
  \psi (q)&:=&f(q,{{q}^{3}})=\sum\limits_{n=0}^{\infty }{{{q}^{n(n+1)/2}}}=\frac{({{q}^{2}};{{q}^{2}})_{\infty }^{2}}{(q;q)_{\infty }^{2}} \label{psi}
\end{eqnarray}}
and
\begin{equation}\label{pen}
f(-q):=f(-q,-{{q}^{2}})=\sum\limits_{n=-\infty }^{\infty }{{{(-1)}^{n}}{{q}^{n(3n-1)/2}}}={{(q;q)}_{\infty }}.
\end{equation}

\begin{lemma}\label{newlem}
We have
\begin{equation}\label{jacobi}
(q;q)_{\infty }^{3}=P({{q}^{3}})-3q{{({{q}^{9}};{{q}^{9}})}^{3}},
\end{equation}
where
\begin{equation}\label{Pq}
P(q)=\sum\limits_{m=-\infty }^{\infty }{{{(-1)}^{m}}(6m+1){{q}^{m(3m+1)/2}}}=f(-q)\varphi (q)\varphi ({{q}^{3}})+4qf(-q)\psi ({{q}^{2}})\psi ({{q}^{6}}).
\end{equation}
\end{lemma}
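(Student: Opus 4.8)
The statement to prove is Lemma \ref{newlem}, which has two parts: the classical cubic Jacobi identity \eqref{jacobi} expressing $(q;q)_\infty^3$ in terms of $P(q^3)$ and $(q^9;q^9)_\infty^3$, and the representation \eqref{Pq} of $P(q)$ in terms of theta functions.

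\textbf{Plan.} The identity \eqref{jacobi} is a well-known classical result, so I would first recall its source. Writing $(q;q)_\infty^3 = \sum_{n\ge 0}(-1)^n(2n+1)q^{n(n+1)/2}$ (Jacobi's identity), one separates the triangular-number exponents $n(n+1)/2$ according to the residue of $n$ modulo $3$. Writing $n=3m$, $n=3m+1$, $n=3m-1$ (i.e.\ $n=3m+2$ shifted), one checks that the exponents $n(n+1)/2$ for $n\equiv 0,2 \pmod 3$ all lie in one progression mod $3$ while those for $n\equiv 1\pmod 3$ lie in another; collecting the first two families gives $P(q^3)$ and the last gives $-3q\,(q^9;q^9)_\infty^3$ after again applying Jacobi's identity in the form $\sum(-1)^m(6m+?)q^{\cdots}$. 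This is a routine but slightly fiddly exponent bookkeeping, and I would simply cite it (it appears, e.g., in Hirschhorn's book or in work on $a_3(n)$) rather than reproduce it.

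The second and more substantive part is \eqref{Pq}. Here the plan is to start from $P(q)=f(-q)\varphi(q)\varphi(q^3)+4qf(-q)\psi(q^2)\psi(q^6)$ and show the right-hand side equals $\sum_{m}(-1)^m(6m+1)q^{m(3m+1)/2}$. I would use the product/Jacobi-triple-product forms \eqref{phi}, \eqref{psi}, \eqref{pen} to convert everything to infinite products, so that the claim becomes an identity among eta-quotients, or alternatively expand $\varphi$ and $\psi$ as theta series and combine. A clean route: recall the $2$-dissection (or $3$-dissection) of $\varphi(q)$ and $\psi(q)$ with respect to $q\to$ powers, and recognize $\varphi(q)\varphi(q^3)+4q\psi(q^2)\psi(q^6)$ as a known combination — in fact $\varphi(q)\varphi(q^3)+4q\psi(q^2)\psi(q^6) = \dfrac{P(q)}{f(-q)}$ is essentially a restatement of the identity $a(q) = \varphi(q)\varphi(q^3)+4q\psi(q^2)\psi(q^6)$? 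No — more precisely, one uses that $P(q)/f(-q)$ is the generating function $\sum a_3$-type object, and the theta decomposition follows from Ramanujan's formulas for $\varphi(q)\varphi(q^3)$ and $\psi(q^2)\psi(q^6)$ in Berndt. Concretely I would cite Berndt's \emph{Ramanujan's Notebooks} for the two product identities $\varphi(q)\varphi(q^3)=\cdots$ and $\psi(q^2)\psi(q^6)=\cdots$ in terms of the relevant theta functions with arguments $q^{\cdot}$, substitute, and verify the triangular series telescopes to $P(q)$.

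\textbf{Main obstacle.} The hard part is organizing the exponent arithmetic in \eqref{Pq} so that the cross terms cancel and the surviving coefficients assemble into the linear form $6m+1$. The two pieces $f(-q)\varphi(q)\varphi(q^3)$ and $4qf(-q)\psi(q^2)\psi(q^6)$ each contribute, after full expansion, a double or triple sum; one must dissect by exponent residues and match. I expect the slickest path is to avoid brute expansion entirely: use the known $2$-dissection of $P(q)$ itself (split $\sum(-1)^m(6m+1)q^{m(3m+1)/2}$ by parity of $m$), identify the even-$m$ part with $f(-q)\varphi(q)\varphi(q^3)$ and the odd-$m$ part with $4qf(-q)\psi(q^2)\psi(q^6)$ via the Jacobi triple product, and cite Berndt for the individual theta evaluations. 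So the plan reduces to: (i) cite \eqref{jacobi} from the literature; (ii) dissect $P(q)$ by parity of the summation index; (iii) identify each half with the claimed theta product using Jacobi's triple product and standard $\varphi,\psi$ identities from Berndt.
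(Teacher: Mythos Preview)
Your treatment of \eqref{jacobi} matches the paper exactly: Jacobi's identity $(q;q)_\infty^3=\sum_{n\ge 0}(-1)^n(2n+1)q^{n(n+1)/2}$ is 3-dissected according to $n\bmod 3$, the classes $n\equiv 0,2\pmod 3$ combine into the bilateral sum defining $P(q^3)$, and the class $n\equiv 1\pmod 3$ gives $-3q(q^9;q^9)_\infty^3$ after a second use of Jacobi's identity.

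For \eqref{Pq}, however, your concrete final plan has a genuine gap. Splitting $P(q)$ by the parity of $m$ does \emph{not} produce the two theta products separately: the even-$m$ part of $P(q)$ begins $1-11q^{5}+13q^{7}+\cdots$ and in particular has no $q^{1}$ term, whereas $f(-q)\varphi(q)\varphi(q^{3})=(1-q-\cdots)(1+2q+\cdots)(1+\cdots)=1+q+\cdots$. So the identification ``even-$m$ part $=f(-q)\varphi(q)\varphi(q^{3})$'' is false already at the coefficient of $q$, and the same happens on the odd side. The two summands on the right of \eqref{Pq} are not the 2-dissection of the left side in any sense; heavy cancellation between them is what collapses the triple theta sum to the single bilateral series.

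The paper's route is the one you almost stated but then abandoned: it recognizes $P(q)/f(-q)$ as a Lambert series. From Hirschhorn one has
\[
P(q)=(q;q)_\infty\Bigg(1+6\sum_{n\ge 0}\Big(\frac{q^{3n+1}}{1-q^{3n+1}}-\frac{q^{3n+2}}{1-q^{3n+2}}\Big)\Bigg),
\]
and Berndt's Theorem 3.7.9 identifies the bracketed Lambert series with $\varphi(q)\varphi(q^{3})+4q\psi(q^{2})\psi(q^{6})$ (this is the Borwein cubic theta function $a(q)$, which you briefly mentioned). Multiplying by $f(-q)=(q;q)_\infty$ gives \eqref{Pq} immediately. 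If you want to salvage your outline, replace the parity dissection step by these two citations.
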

\begin{proof}
By Jacobi's identity (see \cite[Theorem 1.3.9]{Berndt}), we have
\[(q;q)_{\infty }^{3}=\sum\limits_{n=0}^{\infty }{{{(-1)}^{n}}(2n+1){{q}^{n(n+1)/2}}}.\]
Note that $\frac{n(n+1)}{2}\equiv 0$ (mod 3) if and only if $n\equiv 0$ (mod 3) or $n\equiv 2$ (mod 3).
And $\frac{n(n+1)}{2}\equiv 1$ (mod 3) if and only if  $n\equiv 1$ (mod 3).
Hence we have the following 3-dissection identity
\[(q;q)_{\infty }^{3}=P({{q}^{3}})+qR({{q}^{3}}).\]
We have
\begin{displaymath}
\begin{split}
   P({{q}^{3}}) &=\sum\limits_{m=0}^{\infty }{{{(-1)}^{3m}}(6m+1){{q}^{3m(3m+1)/2}}}+\sum\limits_{m=0}^{\infty }{{{(-1)}^{3m+2}}(6m+5){{q}^{(3m+2)(3m+3)/2}}} \\
 & =\sum\limits_{m=0}^{\infty }{{{(-1)}^{m}}(6m+1){{q}^{3m(3m+1)/2}}}+\sum\limits_{m=-\infty }^{-1}{{{(-1)}^{m}}(6m+1){{q}^{3m(3m+1)/2}}} \\
 & =\sum\limits_{m=-\infty }^{\infty }{{{(-1)}^{m}}(6m+1){{q}^{3m(3m+1)/2}}}. \\
\end{split}
\end{displaymath}
Replacing ${{q}^{3}}$ by $q$, we obtain
\[P(q)=\sum\limits_{m=-\infty }^{\infty }{{{(-1)}^{m}}(6m+1){{q}^{m(3m+1)/2}}}.\]
From \cite{Pre} we know that
\[P(q)={{(q;q)}_{\infty }}\Bigg(1+6\sum\limits_{n\ge 0}{\Big(\frac{{{q}^{3n+1}}}{1-{{q}^{3n+1}}}-\frac{{{q}^{3n+2}}}{1-{{q}^{3n+2}}}\Big)}\Bigg).\]
From \cite[Theorem 3.7.9]{Berndt} we have
\[1+6\sum\limits_{n\ge 0}{\Big(\frac{{{q}^{3n+1}}}{1-{{q}^{3n+1}}}-\frac{{{q}^{3n+2}}}{1-{{q}^{3n+2}}}\Big)}=\varphi (q)\varphi ({{q}^{3}})+4q\psi ({{q}^{2}})\psi ({{q}^{6}}).\]
Hence (\ref{Pq}) is proved.

Again, we have
\[qR({{q}^{3}})=\sum\limits_{m=0}^{\infty }{{{(-1)}^{3m+1}}(6m+3){{q}^{(3m+1)(3m+2)/2}}}.\]
Dividing both sides by $q$ and replacing ${{q}^{3}}$ by $q$, we deduce that
\[R(q)=-3\sum\limits_{m=0}^{\infty }{{{(-1)}^{m}}(2m+1){{q}^{3m(m+1)/2}}}=-3({{q}^{3}};{{q}^{3}})_{\infty }^{3}.\]
\end{proof}

\begin{lemma}\label{recipro}
We have
\begin{equation}\label{midid}
P{{(q)}^{3}}-27q({{q}^{3}};{{q}^{3}})_{\infty }^{9}=\frac{(q;q)_{\infty }^{12}}{({{q}^{3}};{{q}^{3}})_{\infty }^{3}},
\end{equation}
and
\begin{equation}\label{recipro}
\frac{1}{(q;q)_{\infty }^{3}}=\frac{({{q}^{9}};{{q}^{9}})_{\infty }^{3}}{({{q}^{3}};{{q}^{3}})_{\infty }^{12}}\Big(P{{({{q}^{3}})}^{2}}+3qP({{q}^{3}})({{q}^{9}};{{q}^{9}})_{\infty }^{3}+9{{q}^{2}}({{q}^{9}};{{q}^{9}})_{\infty }^{6}\Big).
\end{equation}
\end{lemma}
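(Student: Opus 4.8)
The plan is to derive both identities from the Jacobi-type $3$-dissection (\ref{jacobi}) of Lemma \ref{newlem}, namely $(q;q)_{\infty}^{3}=P(q^{3})-3q(q^{9};q^{9})_{\infty}^{3}$, together with the elementary factorization $X^{3}-Y^{3}=\prod_{j=0}^{2}(X-\zeta^{j}Y)$, where $\zeta=e^{2\pi i/3}$. To obtain (\ref{midid}), I would replace $q$ by $\zeta^{j}q^{1/3}$ in (\ref{jacobi}) for $j=0,1,2$, fixing a branch of $q^{1/3}$ on the disc $|q|<1$; since $\zeta^{3}=1$ this gives for each $j$
\[(\zeta^{j}q^{1/3};\zeta^{j}q^{1/3})_{\infty}^{3}=P(q)-3\zeta^{j}q^{1/3}(q^{3};q^{3})_{\infty}^{3}.\]
Multiplying the three identities, the right-hand side collapses to $P(q)^{3}-27q(q^{3};q^{3})_{\infty}^{9}$ by the factorization above (with $X=P(q)$ and $Y=3q^{1/3}(q^{3};q^{3})_{\infty}^{3}$). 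For the left-hand side I would regroup the triple product $\prod_{j=0}^{2}\prod_{n\ge 1}(1-\zeta^{jn}q^{n/3})$: when $3\nmid n$ the product over $j$ equals $1-q^{n}$, and when $n=3m$ it equals $(1-q^{m})^{3}$, so the whole product equals $\frac{(q;q)_{\infty}}{(q^{3};q^{3})_{\infty}}\cdot(q;q)_{\infty}^{3}=\frac{(q;q)_{\infty}^{4}}{(q^{3};q^{3})_{\infty}}$. Cubing yields $\frac{(q;q)_{\infty}^{12}}{(q^{3};q^{3})_{\infty}^{3}}$, which is (\ref{midid}). (Equivalently, (\ref{Pq}) shows $P(q)=(q;q)_{\infty}a(q)$ with $a(q)$ the Borwein cubic theta function, and (\ref{midid}) is then the classical identity $a(q)^{3}=b(q)^{3}+c(q)^{3}$ multiplied through by $(q;q)_{\infty}^{3}$; but the roots-of-unity argument is self-contained.)

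For the second identity I would simply rationalize the cube. By (\ref{jacobi}), $1/(q;q)_{\infty}^{3}=\big(P(q^{3})-3q(q^{9};q^{9})_{\infty}^{3}\big)^{-1}$; multiplying numerator and denominator by $P(q^{3})^{2}+3qP(q^{3})(q^{9};q^{9})_{\infty}^{3}+9q^{2}(q^{9};q^{9})_{\infty}^{6}$ turns the denominator into $P(q^{3})^{3}-27q^{3}(q^{9};q^{9})_{\infty}^{9}$, which by (\ref{midid}) with $q$ replaced by $q^{3}$ equals $\frac{(q^{3};q^{3})_{\infty}^{12}}{(q^{9};q^{9})_{\infty}^{3}}$. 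Moving this factor to the numerator produces exactly the asserted identity.

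The only point requiring care is the first step: one must check that the analytic identity (\ref{jacobi}) remains valid after the substitution $q\mapsto\zeta^{j}q^{1/3}$, and that the infinite products may legitimately be regrouped. Since every product converges absolutely for $|q|<1$ and the substitution maps the punctured disc into itself, this is routine, so I do not anticipate a genuine obstacle; the rest is formal algebra.
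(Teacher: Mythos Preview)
Your argument is correct and is essentially the paper's own proof: both obtain (\ref{midid}) by applying the $3$-dissection (\ref{jacobi}) at the three cube roots of unity and using $X^{3}-Y^{3}=\prod_{j}(X-\zeta^{j}Y)$, then derive (\ref{recipro}) by the same rationalization trick. The only cosmetic difference is that the paper substitutes $q\mapsto\omega^{k}q$ (working first in $q^{3}$ and then replacing $q^{3}\to q$) rather than your $q\mapsto\zeta^{j}q^{1/3}$, which sidesteps the branch-of-$q^{1/3}$ issue but is otherwise the same computation.
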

\begin{proof}
Let $\omega ={{e}^{2\pi i/3}}$. On the one hand, by Lemma \ref{newlem}, we have
\[(q;q)_{\infty }^{3}(\omega q;\omega q)_{\infty }^{3}({{\omega }^{2}}q;{{\omega }^{2}}q)_{\infty }^{3}=\prod\limits_{k=0}^{2}{\Big(P({{q}^{3}})-3{{\omega }^{k}}qf_{9}^{3}\Big)}=P{{({{q}^{3}})}^{3}}-27{{q}^{3}}f_{9}^{9}.\]
On the other hand, by definition we have
\begin{displaymath}
\begin{split}
  & (q;q)_{\infty }^{3}(\omega q;\omega q)_{\infty }^{3}({{\omega }^{2}}q;{{\omega }^{2}}q)_{\infty }^{3} \\
 & =\prod\limits_{n=1}^{\infty }{{{(1-{{q}^{n}})}^{3}}{{(1-{{\omega }^{n}}{{q}^{n}})}^{3}}{{(1-{{\omega }^{2n}}{{q}^{n}})}^{3}}} \\
 & ={{\Big(\prod\limits_{3|n}{(1-{{q}^{n}})(1-{{\omega }^{n}}{{q}^{n}})(1-{{\omega }^{2n}}{{q}^{n}})}\Big)}^{3}}\cdot \prod\limits_{3 \nmid n}{(1-{{q}^{3n}}}{{)}^{3}} \\
 & =\prod\limits_{n=1}^{\infty }{{{(1-{{q}^{3n}})}^{9}}}\cdot \prod\limits_{n=1}^{\infty }{{{(1-{{q}^{3n}})}^{3}}}/\prod\limits_{n=1}^{\infty }{{{(1-{{q}^{9n}})}^{3}}} \\
 & =\frac{({{q}^{3}};{{q}^{3}})_{\infty }^{12}}{({{q}^{9}};{{q}^{9}})_{\infty }^{3}}. \\
\end{split}
\end{displaymath}
Hence we deduce that
\[P{{({{q}^{3}})}^{3}}-27{{q}^{3}}f_{9}^{9}=\frac{({{q}^{3}};{{q}^{3}})_{\infty }^{12}}{({{q}^{9}};{{q}^{9}})_{\infty }^{3}}.\]
Replacing ${{q}^{3}}$ by $q$ we obtain (\ref{midid}).

By (\ref{jacobi}) we have
\begin{displaymath}
\begin{split}
   \frac{1}{(q;q)_{\infty }^{3}}&=\frac{\prod\limits_{k=1}^{2}{(P({{q}^{3}})-3{{\omega }^{k}}qf_{9}^{3})}}{\prod\limits_{k=0}^{3}{(P({{q}^{3}})-3{{\omega }^{k}}qf_{9}^{3})}} \\
 & =\frac{1}{P{{({{q}^{3}})}^{3}}-27{{q}^{3}}f_{9}^{9}}\cdot \Big(P{{({{q}^{3}})}^{2}}+3qP({{q}^{3}})f_{9}^{3}+9{{q}^{2}}f_{9}^{6}\Big) \\
 & =\frac{f_{9}^{3}}{f_{3}^{12}}\Big(P{{({{q}^{3}})}^{2}}+3qP({{q}^{3}})f_{9}^{3}+9{{q}^{2}}f_{9}^{6}\Big),\\
\end{split}
\end{displaymath}
where the last equality follows from (\ref{midid}).
\end{proof}

\begin{theorem}\label{B3gen}
We have
{\setlength\arraycolsep{2pt}
\begin{eqnarray}
   \sum\limits_{n=0}^{\infty }{{{B}_{3}}(3n){{q}^{n}}}&=& P{{(q)}^{2}}\frac{({{q}^{3}};{{q}^{3}})_{\infty }^{3}}{{{(q;q)}_{\infty }^{3}}}, \label{B3n0}\\
 \sum\limits_{n=0}^{\infty }{{{B}_{3}}(3n+1){{q}^{n}}}& =& 3P(q)\frac{({{q}^{3}};{{q}^{3}})_{\infty }^{6}}{(q;q)_{\infty }^{3}}, \label{B3n1} \\
 \sum\limits_{n=0}^{\infty }{{{B}_{3}}(3n+2){{q}^{n}}}& =& 9\frac{({{q}^{3}};{{q}^{3}})_{\infty }^{9}}{(q;q)_{\infty }^{3}}, \label{B3n2} \\
\end{eqnarray}}
\end{theorem}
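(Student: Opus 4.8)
The plan is to obtain all three dissection components at once from the reciprocal identity \eqref{recipro}. Starting from \eqref{B3}, namely $\sum_{n\ge 0}B_3(n)q^n = f_3^9/f_1^3$, I would substitute the formula for $1/f_1^3 = 1/(q;q)_\infty^3$ given by \eqref{recipro} and simplify the resulting product of powers of $f_3$:
\[
\sum_{n=0}^{\infty}B_3(n)q^n = f_3^9\cdot\frac{f_9^3}{f_3^{12}}\Big(P(q^3)^2 + 3qP(q^3)f_9^3 + 9q^2 f_9^6\Big) = \frac{f_9^3}{f_3^3}\Big(P(q^3)^2 + 3qP(q^3)f_9^3 + 9q^2 f_9^6\Big).
\]

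The key observation is that the factor $f_9^3/f_3^3 = (q^9;q^9)_\infty^3/(q^3;q^3)_\infty^3$, the factor $P(q^3)$, and every remaining $f_9$ are power series in $q^3$; this is immediate from the product definitions of $f_3,f_9$ and from $P(q^3) = \sum_m (-1)^m(6m+1)q^{3m(3m+1)/2}$. Hence the three summands on the right are, respectively, $q^0$, $q^1$, $q^2$ times power series in $q^3$, so they are exactly the three pieces of the $3$-dissection of $\sum_n B_3(n)q^n$. Extracting the terms whose exponents are $\equiv 0,1,2 \pmod 3$, I would get
\[
\sum_{n=0}^{\infty}B_3(3n)q^{3n} = \frac{f_9^3}{f_3^3}P(q^3)^2, \quad \sum_{n=0}^{\infty}B_3(3n+1)q^{3n+1} = 3q\frac{f_9^6}{f_3^3}P(q^3), \quad \sum_{n=0}^{\infty}B_3(3n+2)q^{3n+2} = 9q^2\frac{f_9^9}{f_3^3}.
\]

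To finish, I replace $q^3$ by $q$ in the first identity, and do the same in the second and third after dividing through by $q$ and $q^2$ respectively; since $f_9/f_3 \mapsto f_3/f_1$ under $q^3 \mapsto q$, this produces precisely \eqref{B3n0}--\eqref{B3n2}. I do not anticipate any real obstacle here: essentially all of the work has been front-loaded into Lemma \ref{newlem} and the reciprocal identity \eqref{recipro}, which via the cube-root-of-unity factorization already encode the $3$-dissections of $(q;q)_\infty^3$ and of $1/(q;q)_\infty^3$. The only point needing a line of care is verifying that the separating factors are genuine functions of $q^3$, so that the three summands do not interfere across residue classes modulo $3$.
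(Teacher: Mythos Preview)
Your proposal is correct and follows essentially the same route as the paper: multiply \eqref{B3} by the reciprocal identity \eqref{recipro}, simplify $f_3^9\cdot f_9^3/f_3^{12}$ to $f_9^3/f_3^3$, and then read off the three residue classes modulo~$3$. The only addition you make is the explicit check that the prefactors are power series in $q^3$, which the paper leaves implicit.
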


\begin{proof}
By (\ref{B3}) and (\ref{recipro}) we have
\[\sum\limits_{n=0}^{\infty }{{{B}_{3}}(n){{q}^{n}}}=\frac{({{q}^{3}};{{q}^{3}})_{\infty }^{9}}{(q;q)_{\infty }^{3}}=\frac{({{q}^{9}};{{q}^{9}})_{\infty }^{3}}{({{q}^{3}};{{q}^{3}})_{\infty }^{3}}\Big(P{{({{q}^{3}})}^{2}}+3qP(q^{3})f_{9}^{3}+9{{q}^{2}}f_{9}^{6}\Big).\]
Extracting the terms involving $q^{3n}$, $q^{3n+1}$ and $q^{3n+2}$, respectively, we get the desired results.
\end{proof}

\begin{theorem}\label{B33prop}
For any integer $k\ge 1$, we have
\[{{B}_{3}}({{3}^{k}}n+{{3}^{k}}-1)={{3}^{2k}}{{B}_{3}}(n).\]
\end{theorem}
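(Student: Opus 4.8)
The plan is to establish the identity by induction on $k$, using the base case $k=1$ together with a self-similar recursion for the subsequence $B_3(3n+2)$. First I would record the base case: by \eqref{B3n2} we have $\sum_{n\ge 0} B_3(3n+2)q^n = 9 (q^3;q^3)_\infty^9/(q;q)_\infty^3$, and comparing with \eqref{B3} gives $\sum_{n\ge 0} B_3(3n+2)q^n = 9\sum_{n\ge 0} B_3(n)q^n$, hence $B_3(3n+2) = 9 B_3(n)$ for all $n\ge 0$. This is precisely the claimed identity for $k=1$, since $3^1-1 = 2$ and $3^{2\cdot 1} = 9$.

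Next I would run the induction. Assume $B_3(3^k n + 3^k - 1) = 3^{2k} B_3(n)$ holds for some $k\ge 1$ and all $n\ge 0$. In that identity replace $n$ by $3n+2$: then $3^k(3n+2) + 3^k - 1 = 3^{k+1}n + 2\cdot 3^k + 3^k - 1 = 3^{k+1}n + 3^{k+1} - 1$, so the left side becomes $B_3(3^{k+1}n + 3^{k+1} - 1)$, while the right side becomes $3^{2k} B_3(3n+2)$. Now apply the base case $B_3(3n+2) = 9 B_3(n) = 3^2 B_3(n)$ to conclude $B_3(3^{k+1}n + 3^{k+1} - 1) = 3^{2k}\cdot 3^2\cdot B_3(n) = 3^{2(k+1)} B_3(n)$, completing the induction.

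I do not anticipate a genuine obstacle here: all the analytic work has already been done in Theorem \ref{B3gen}, and what remains is purely the bookkeeping of arithmetic progressions. The one point that deserves care is the index arithmetic $3^k(3n+2) + 3^k - 1 = 3^{k+1}n + 3^{k+1} - 1$, which must be verified so that substituting the arithmetic progression $3n+2$ into the level-$k$ identity lands exactly on the progression $3^{k+1}n + 3^{k+1} - 1$ appearing in the level-$(k+1)$ identity; this is the crux of why the $3n+2$ subsequence (as opposed to $3n$ or $3n+1$) is the right one to iterate. An alternative route, should one prefer not to invoke \eqref{B3n2} directly, is to derive $B_3(3n+2) = 9 B_3(n)$ from \eqref{B3n2} and \eqref{B3} as above and then note that the whole statement is just the $k$-fold iterate of this single congruence-free identity; either way the proof is a one-paragraph induction once $B_3(3n+2) = 9B_3(n)$ is in hand.
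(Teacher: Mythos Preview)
Your proposal is correct and follows essentially the same approach as the paper: deduce $B_3(3n+2)=9B_3(n)$ by comparing \eqref{B3n2} with \eqref{B3}, then iterate via the substitution $n\mapsto 3n+2$ and induction on $k$. The paper presents the induction more tersely (it just writes out the $k=2$ step and says the rest follows), but the argument is the same.
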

\begin{proof}
From (\ref{B3}) and (\ref{B3n2}) we deduce that
\begin{equation}\label{9re}
{{B}_{3}}(3n+2)=9{{B}_{3}}(n).
\end{equation}
This proves the theorem for $k=1$.  Replacing $n$ by $3n+2$ in (\ref{9re}), we deduce that
\[{{B}_{3}}(9n+8)={{3}^{2}}{{B}_{3}}(3n+2)={{3}^{4}}{{B}_{3}}(n),\]
which proves the theorem for $k=2$. The theorem now follows by induction on $k$.
\end{proof}
\begin{corollary}
For any integer $k\ge 1$, we have
\begin{displaymath}
\begin{split}
{{B}_{3}}({{3}^{k}}n+{{3}^{k}}-1)&\equiv 0 \pmod{{{3}^{2k}}},\\
{{B}_{3}}({{3}^{k}}n+2\cdot {{3}^{k-1}}-1)&\equiv 0 \pmod{{3}^{2k-1}}.
\end{split}
\end{displaymath}
\begin{proof}
The first congruence follows immediately from Theorem \ref{B33prop}.

By (\ref{B3n2}), we know that ${{B}_{3}}(3n+1)\equiv 0$ (mod 3), and this proves the second congruence for $k=1$. For $k\ge 2$, by Theorem \ref{B33prop}, we have
\[{{B}_{3}}({{3}^{k}}n+2\cdot {{3}^{k-1}}-1)={{B}_{3}}({{3}^{k-1}}(3n+1)+{{3}^{k-1}}-1)={{3}^{2k-2}}{{B}_{3}}(3n+1)\equiv 0 \pmod{{3}^{2k-1}}.\]
\end{proof}

\end{corollary}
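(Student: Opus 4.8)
The plan is to obtain both congruences as essentially immediate consequences of Theorem \ref{B33prop}, which already supplies the \emph{exact} identity $B_3(3^k n + 3^k - 1) = 3^{2k} B_3(n)$, supplemented by one elementary divisibility fact about $B_3(3n+1)$ that can be read straight off a generating function. The first congruence requires no further work: since each $B_3(m)$ is a nonnegative integer, the identity of Theorem \ref{B33prop} shows that $B_3(3^k n + 3^k - 1)$ is divisible by $3^{2k}$ for every $k \ge 1$.

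For the second congruence the one genuine idea is to split the residue class correctly. I would rewrite
\[
3^k n + 2\cdot 3^{k-1} - 1 = 3^{k-1}(3n+1) + 3^{k-1} - 1,
\]
so that for $k \ge 2$ Theorem \ref{B33prop}, applied with $k-1$ in place of $k$ and $3n+1$ in place of $n$, yields
\[
B_3\big(3^k n + 2\cdot 3^{k-1} - 1\big) = 3^{2k-2}\,B_3(3n+1).
\]
It then remains only to observe that $3 \mid B_3(3n+1)$; this is clear from (\ref{B3n1}), whose right-hand side $3P(q)f_3^6/f_1^3$ carries an explicit factor of $3$. Combining the two facts, $B_3(3^k n + 2\cdot 3^{k-1} - 1)$ is divisible by $3^{2k-2}\cdot 3 = 3^{2k-1}$, as required. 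The base case $k = 1$ is precisely the assertion $3 \mid B_3(3n+1)$ and so is covered by the same remark about (\ref{B3n1}).

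I do not expect any real obstacle here. The only point that needs a little care is that the reduction above invokes Theorem \ref{B33prop} with parameter $k-1$, which is legitimate only when $k-1 \ge 1$; that is exactly why $k = 1$ has to be treated separately. Beyond this minor bookkeeping, the corollary is a routine deduction once Theorem \ref{B33prop} and the generating function (\ref{B3n1}) are available.
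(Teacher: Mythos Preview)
Your proposal is correct and follows essentially the same route as the paper: the first congruence is read off Theorem~\ref{B33prop}, and the second is obtained by rewriting $3^{k}n+2\cdot 3^{k-1}-1=3^{k-1}(3n+1)+3^{k-1}-1$, applying Theorem~\ref{B33prop} with $k-1$, and using the explicit factor of $3$ in the generating function for $B_3(3n+1)$. The only discrepancy is a labeling matter---the paper cites (\ref{B3n2}) for the fact $3\mid B_3(3n+1)$, whereas you correctly cite (\ref{B3n1}); your reference is the right one.
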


\begin{theorem}\label{B36discection}
We have
{\setlength\arraycolsep{2pt}
\begin{eqnarray}
\sum\limits_{n=0}^{\infty }{{{B}_{3}}(6n){{q}^{n}}}&=&\frac{f_{2}^{10}f_{3}^{9}}{f_{1}^{7}f_{6}^{6}}+16q\frac{f_{2}^{7}f_{6}^{3}}{f_{1}^{4}}+27q\frac{f_{2}^{2}f_{3}^{5}f_{6}^{2}}{f_{1}^{3}} \label{B36n} \\
\sum\limits_{n=0}^{\infty }{{{B}_{3}}(6n+4){{q}^{n}}}&=&24\frac{f_{2}^{8}f_{3}^{3}}{f_{1}^{5}}. \label{B36n4}
\end{eqnarray}}
\end{theorem}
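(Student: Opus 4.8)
The plan is to start from the 2-dissection $\sum_n B_3(2n+1)q^n = 3\dfrac{f_3^9}{f_1^3}+4q\dfrac{f_6^9}{f_2^3}$, which was already derived in the proof of the first theorem (combining (\ref{B32n1}) with (\ref{thm1id2})), and similarly the companion identity (\ref{B32n}) for $\sum_n B_3(2n)q^n$. Since $6n$ and $6n+4$ both have even index, I would work with (\ref{B32n}): replacing the internal variable, $\sum_n B_3(2n)q^n = \dfrac{f_2^9 f_3^6}{f_1^6 f_6^3}+3q\dfrac{f_2 f_3^2 f_6^5}{f_1^2}$, and then perform a 3-dissection of the right-hand side, extracting the residue classes $n\equiv 0$ and $n\equiv 2 \pmod 3$ to capture $B_3(6n)$ and $B_3(6n+4)$ respectively.

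The key technical step is the 3-dissection of $\dfrac{f_2^9 f_3^6}{f_1^6 f_6^3}$ and $\dfrac{f_2 f_3^2 f_6^5}{f_1^2}$. The factors $f_3^6$ and $f_6^3$ (and $f_3^2$) already sit in $q^3$-graded pieces, so the real work is to 3-dissect $1/f_1^6$ and $1/f_1^2$, together with tracking the factors $f_2^9$, $f_2$ (whose arguments are $\equiv 2 \bmod 3$ only after a further 3-dissection of $f_2$-type series). I would either (i) invoke a known 3-dissection of $1/f_1^2$ and of $1/f_1^6=(1/f_1^2)^3$ in terms of $f_3,f_9$-modular data, or (ii) more in the spirit of Lemmas \ref{newlem}--\ref{recipro}, use $(q;q)_\infty^3 = P(q^3)-3q f_9^3$ to write $1/f_1^6 = \bigl(P(q^3)-3qf_9^3\bigr)^{-2}$ and rationalize the denominator by multiplying numerator and denominator by the appropriate conjugate factors $\prod_{k}\bigl(P(q^3)-3\omega^k q f_9^3\bigr)$, exactly as in the derivation of (\ref{recipro}), to land in $\mathbb{C}[[q^3]]$ times low-degree monomials in $q$. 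Collecting the $q^{3n+1}$-part (for $B_3(6n+4)$) and $q^{3n}$-part (for $B_3(6n)$), replacing $q^3$ by $q$, and simplifying with the elementary identities (\ref{lemid1})--(\ref{lemid4}) relating $f_1,f_2,f_3,f_4,f_6,f_{12}$, should collapse everything to the claimed closed forms. The fact that $B_3(6n+1)=B_3(2\cdot(3n)+1)$-type coefficients vanish is consistent with (\ref{B3n2}) and serves as a useful sanity check.

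The main obstacle I anticipate is bookkeeping: after the 3-dissection one gets several $q$-monomials each multiplied by a product of eta-quotients in $q^3,q^6,q^9,q^{12},\dots$, and sorting which monomials contribute to which residue class $\bmod 3$ — and then proving the resulting eta-quotient identities — is where errors creep in. In particular, showing that the $B_3(6n+4)$-piece simplifies all the way down to the single term $24\,f_2^8 f_3^3/f_1^5$ (no residual $q$-series) will require a nontrivial cancellation, likely an application of an addition formula of the type (\ref{keyid}) or of the 2-dissections (\ref{lemid1})--(\ref{lemid2}) used backwards. A cleaner route to (\ref{B36n4}) alone may be to combine (\ref{B32n1})--(\ref{thm1id2}) with a known result such as $B_3(3m+2)=9B_3(m)$ (Theorem \ref{B33prop}) and the 2-dissection of $f_6^9/f_2^3$: since $6n+4 = 2(3n+2)$, one has $B_3(6n+4)$ appearing as the even part of $\sum_m B_3(3m+2)q^m = 9\,f_3^9/f_1^3$, so $\sum_n B_3(6n+4)q^n = 9\cdot(\text{even part of }f_3^9/f_1^3) = 9\cdot\frac{1}{3}\sum_n B_3(2\cdot 3n)$-type relation; unwinding this with (\ref{B32n}) and (\ref{lemid1}) should give $24 f_2^8 f_3^3/f_1^5$ after applying (\ref{thm1id1}). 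I would pursue this shortcut first and fall back on the brute-force 3-dissection only if the shortcut stalls.
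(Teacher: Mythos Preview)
Your main plan reverses the order the paper uses: the paper first applies the 3-dissection (Theorem~\ref{B3gen}, via $P(q)$ and Lemma~\ref{recipro}) to obtain $\sum B_3(3n)q^n$ and $\sum B_3(3n+1)q^n$, and \emph{then} 2-dissects those using the explicit form (\ref{Pqexp}) of $P(q)$ together with (\ref{lemid1})--(\ref{lemid4}). This order is cleaner precisely because after the 3-dissection all the awkward content is packaged into $P(q)$, which has the simple 2-dissection (\ref{Pqexp}); by contrast, your 2-then-3 route forces you to 3-dissect expressions such as $f_2^{9}/f_1^{6}$ and $f_2/f_1^{2}$ in which \emph{both} $f_1$ and $f_2$ are nontrivial modulo~3, so the bookkeeping you rightly worry about is substantially heavier than in the paper's argument.

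Your proposed shortcut for (\ref{B36n4}) contains a genuine error: $6n+4\equiv 1\pmod 3$, not $2$, so $6n+4$ is never of the form $3m+2$ and the relation $B_3(3m+2)=9B_3(m)$ does not touch $B_3(6n+4)$. Extracting the even and odd parts of $\sum_m B_3(3m+2)q^m$ yields $B_3(6n+2)$ and $B_3(6n+5)$ respectively, neither of which is what you want; the identity $6n+4=2(3n+2)$ tells you only that $B_3(6n+4)$ is the $q^{3n+2}$-coefficient of $\sum_m B_3(2m)q^m$, which throws you back onto the brute-force 3-dissection of (\ref{B32n}). The correct entry point is $\sum_m B_3(3m+1)q^m=3P(q)\,f_3^{6}/f_1^{3}$ with $m=2n+1$, which is exactly what the paper does; the final collapse to the single term $24\,f_2^{8}f_3^{3}/f_1^{5}$ then comes from one application of (\ref{thm1id1}), just as you anticipated.
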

\begin{proof}
From (\ref{phi}) and (\ref{psi}), it is not hard to see that
\begin{equation}\label{phipsi}
\varphi (q)=\frac{f_{2}^{5}}{f_{1}^{2}f_{4}^{2}}, \quad \psi (q)=\frac{f_{2}^{2}}{{{f}_{1}}}.
\end{equation}
By (\ref{Pq}), we have
\begin{equation}\label{Pqexp}
P(q)=\frac{f_{2}^{5}f_{6}^{5}}{{{f}_{1}}f_{3}^{2}f_{4}^{2}f_{12}^{2}}+4q\frac{{{f}_{1}}f_{4}^{2}f_{12}^{2}}{{{f}_{2}}{{f}_{6}}}.
\end{equation}
Substituting (\ref{Pqexp}) into (\ref{B3n0}), we obtain
\begin{equation}\label{B3nexp}
\sum\limits_{n=0}^{\infty }{{{B}_{3}}(3n){{q}^{n}}}=\Big(\frac{f_{2}^{10}f_{6}^{10}}{f_{1}^{5}{{f}_{3}}f_{4}^{4}f_{12}^{4}}+16{{q}^{2}}\frac{f_{3}^{3}f_{4}^{4}f_{12}^{4}}{{{f}_{1}}f_{2}^{2}f_{6}^{2}}\Big)+8q\frac{f_{2}^{4}{{f}_{3}}f_{6}^{4}}{f_{1}^{3}}.
\end{equation}
By (\ref{lemid2}) and (\ref{lemid4}), we have
\begin{equation}\label{f153}
\frac{1}{f_{1}^{5}{{f}_{3}}}={{\Big(\frac{{{f}_{3}}}{f_{1}^{3}}\Big)}^{2}}\cdot \frac{{{f}_{1}}}{f_{3}^{3}}=\Big(\frac{f_{4}^{14}}{f_{2}^{17}{{f}_{6}}f_{12}^{2}}+3{{q}^{2}}\frac{f_{4}^{6}f_{12}^{6}}{f_{6}^{5}f_{2}^{13}}\Big)+q\Big(5\frac{f_{4}^{10}f_{12}^{2}}{f_{2}^{15}f_{6}^{3}}-9{{q}^{2}}\frac{f_{4}^{2}f_{12}^{10}}{f_{6}^{7}f_{2}^{11}}\Big).
\end{equation}
Now substituting (\ref{lemid1}), (\ref{lemid2}) and (\ref{f153}) into (\ref{B3nexp}),  and then extracting the terms involving ${{q}^{2n}}$, we obtain
\[\sum\limits_{n=0}^{\infty }{{{B}_{3}}(6n){{q}^{2n}}}=\frac{f_{6}^{9}f_{4}^{10}}{f_{2}^{7}f_{12}^{6}}+16{{q}^{2}}\frac{f_{4}^{7}f_{12}^{3}}{f_{2}^{4}}+27{{q}^{2}}\frac{f_{4}^{2}f_{6}^{5}f_{12}^{2}}{f_{2}^{3}}.\]
Replacing ${{q}^{2}}$ by $q$ we prove (\ref{B36n}).

Similarly, substituting (\ref{Pqexp}) into (\ref{B3n1}), we obtain
\begin{equation}\label{B3n1exp}
\sum\limits_{n=0}^{\infty }{{{B}_{3}}(3n+1){{q}^{n}}}=3\frac{f_{2}^{5}f_{3}^{4}f_{6}^{5}}{f_{1}^{4}f_{4}^{2}f_{12}^{2}}+12q\frac{f_{3}^{6}f_{4}^{2}f_{12}^{2}}{f_{1}^{2}{{f}_{2}}{{f}_{6}}}.
\end{equation}

By (\ref{lemid1}), we deduce that
\begin{displaymath} \frac{f_{3}^{6}}{f_{1}^{2}}={{\Big(\frac{f_{3}^{3}}{{{f}_{1}}}\Big)}^{2}}=\frac{f_{4}^{6}f_{6}^{4}}{f_{2}^{4}f_{12}^{2}}+2q\frac{f_{4}^{2}f_{6}^{2}f_{12}^{2}}{f_{2}^{2}}+{{q}^{2}}\frac{f_{12}^{6}}{f_{4}^{2}}. \nonumber
\end{displaymath}
Substituting this identity and (\ref{f134}) into (\ref{B3n1exp}), and extracting the terms involving ${{q}^{2n+1}}$, we obtain
\[\sum\limits_{n=0}^{\infty }{{{B}_{3}}(6n+4){{q}^{2n+1}}}=12q\Big(\frac{f_{4}^{3}f_{6}^{8}}{f_{2}^{4}f_{12}}+\frac{f_{4}^{8}f_{6}^{3}}{f_{2}^{5}}+{{q}^{2}}\frac{f_{12}^{8}}{{{f}_{2}}{{f}_{6}}}\Big).\]
Dividing both sides by $q$ and replacing ${{q}^{2}}$ by $q$, and applying (\ref{thm1id1})£¬ we obtain that
\[\sum\limits_{n=0}^{\infty }{{{B}_{3}}(6n+4){{q}^{n}}}=\frac{12}{{{f}_{1}}{{f}_{3}}}\Big(\frac{f_{2}^{3}f_{3}^{9}}{f_{1}^{3}{{f}_{6}}}+qf_{6}^{8}\Big)+12\frac{f_{2}^{8}f_{3}^{3}}{f_{1}^{5}}=24\frac{f_{2}^{8}f_{3}^{3}}{f_{1}^{5}}.\]
\end{proof}
\begin{corollary}
For any integer $n\ge 0$, we have
\[{{B}_{3}}(6n+4)\equiv 0  \pmod{24}.\]
\end{corollary}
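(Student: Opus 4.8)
The plan is to read the congruence directly off the generating function identity (\ref{B36n4}) established in Theorem \ref{B36discection}, namely
\[\sum_{n=0}^{\infty} B_3(6n+4)\,q^n = 24\,\frac{f_2^8 f_3^3}{f_1^5}.\]
First I would observe that, apart from the explicit factor $24$, the right-hand side is a formal power series in $q$ with \emph{integer} coefficients. Indeed $1/f_1^5 = 1/(q;q)_\infty^5$ is the generating function for partition $5$-tuples, hence has nonnegative integer coefficients, while $f_2^8 f_3^3 = (q^2;q^2)_\infty^8(q^3;q^3)_\infty^3$ is an infinite product of factors of the form $1-q^m$ and therefore also has integer coefficients; a product of two power series with integer coefficients again has integer coefficients.

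Consequently, writing $\sum_{n\ge 0} c_n q^n := f_2^8 f_3^3 / f_1^5$ with $c_n \in \mathbb{Z}$, comparison of the coefficients of $q^n$ on the two sides of (\ref{B36n4}) gives $B_3(6n+4) = 24\,c_n$ for every $n \ge 0$. Since each $c_n$ is an integer, this immediately yields $B_3(6n+4) \equiv 0 \pmod{24}$, which is the claim.

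There is essentially no obstacle once Theorem \ref{B36discection} is in hand; the substantive work — the iterated $2$-dissections via Lemma \ref{13id} together with the identity (\ref{thm1id1}) — has already been performed in deriving (\ref{B36n4}). If instead one wanted a fully self-contained argument, the crux would be verifying (\ref{B36n4}) itself, where the main subtlety is the careful bookkeeping of the even and odd parts after substituting the $2$-dissections of $f_3^3/f_1$ and $f_3/f_1^3$ into the expression for $\sum_{n\ge0} B_3(3n+1)q^n$ and then applying (\ref{thm1id1}) to collapse the result; but that step is not needed here, as the theorem may be invoked directly.
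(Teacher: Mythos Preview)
Your proof is correct and follows essentially the same approach as the paper: the corollary is an immediate consequence of the generating function identity (\ref{B36n4}), since $f_2^8 f_3^3/f_1^5$ has integer coefficients. The paper simply states ``This follows from (\ref{B36n4})''; your added justification that the series has integer coefficients is a harmless elaboration of the same one-line argument.
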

\begin{proof}
This follows from (\ref{B36n4}).
\end{proof}
\begin{theorem}
For any integer $n\ge 0$, we have
\[{{B}_{3}}(30n+10)\equiv {{B}_{3}}(30n+28)\equiv 0 \pmod{120}.\]
\end{theorem}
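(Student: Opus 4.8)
The plan is to work modulo $5$ using the generating function \eqref{B36n4}, which has just been established. Observe that $30n+10=6(5n+1)+4$ and $30n+28=6(5n+4)+4$, so if we write $f_{2}^{8}f_{3}^{3}/f_{1}^{5}=\sum_{N\ge 0}c(N)q^{N}$, then \eqref{B36n4} gives $B_{3}(30n+10)=24\,c(5n+1)$ and $B_{3}(30n+28)=24\,c(5n+4)$. Since $120=24\cdot 5$, the two asserted congruences will follow once we show that $c(5n+1)\equiv c(5n+4)\equiv 0\pmod 5$; that is, that the coefficients of $q^{5n+1}$ and $q^{5n+4}$ in $f_{2}^{8}f_{3}^{3}/f_{1}^{5}$ are divisible by $5$.

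Next I would reduce the series modulo $5$. From $(1-x)^{5}\equiv 1-x^{5}\pmod 5$ we get $f_{1}^{5}\equiv f_{5}$ and $f_{2}^{5}\equiv f_{10}\pmod 5$, whence
\[\frac{f_{2}^{8}f_{3}^{3}}{f_{1}^{5}}=f_{2}^{3}f_{3}^{3}\cdot\frac{f_{2}^{5}}{f_{1}^{5}}\equiv f_{2}^{3}f_{3}^{3}\cdot\frac{f_{10}}{f_{5}}\pmod 5.\]
Since $f_{10}/f_{5}$ is a power series in $q^{5}$ with integer coefficients, picking out the coefficient of $q^{5n+1}$ (resp.\ $q^{5n+4}$) on the right involves only the coefficients of $q^{M}$ in $f_{2}^{3}f_{3}^{3}$ with $M\equiv 1\pmod 5$ (resp.\ $M\equiv 4\pmod 5$). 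Thus everything reduces to the statement: the coefficient of $q^{M}$ in $f_{2}^{3}f_{3}^{3}$ is divisible by $5$ whenever $M\equiv 1$ or $4\pmod 5$.

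For this last point I would apply Jacobi's identity $(q;q)_{\infty}^{3}=\sum_{k\ge 0}(-1)^{k}(2k+1)q^{k(k+1)/2}$ to both factors. Writing the summation indices as odd positive integers $u=2i+1$ and $v=2j+1$ (with $i,j\ge 0$), one finds $f_{2}^{3}f_{3}^{3}=\sum_{u,v}(-1)^{i+j}uv\,q^{(2u^{2}+3v^{2}-5)/8}$, the exponent being an integer because $u,v$ are odd. A direct computation gives $(2u^{2}+3v^{2}-5)/8\equiv 4u^{2}+v^{2}\pmod 5$. Since every square is $\equiv 0$, $1$, or $4\pmod 5$, a brief check shows that $4u^{2}+v^{2}$ takes only the values $0$, $2$, $3\pmod 5$ when $5\nmid uv$; hence if the exponent is $\equiv 1$ or $4\pmod 5$ we must have $5\mid u$ or $5\mid v$, so $5\mid uv$ and the corresponding term vanishes modulo $5$. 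Summing over all pairs $(u,v)$ that produce a given exponent yields the required divisibility, completing the proof.

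None of these steps is genuinely difficult: the binomial reduction modulo $5$ and the exponent bookkeeping are routine, and the only step carrying real content is the final congruence computation for $4u^{2}+v^{2}$ modulo $5$, which is a finite verification over residue classes. If it is cleaner to present, one can instead split into the subcases $M\equiv 1$ and $M\equiv 4\pmod 5$ and list the admissible residues of $u$ and $v$ in each; the conclusion $5\mid uv$ is the same either way.
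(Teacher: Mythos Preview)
Your proof is correct and follows essentially the same route as the paper: start from \eqref{B36n4}, reduce $f_{2}^{8}f_{3}^{3}/f_{1}^{5}$ to $(f_{10}/f_{5})\,f_{2}^{3}f_{3}^{3}$ modulo $5$, expand $f_{2}^{3}f_{3}^{3}$ by Jacobi's identity, and check by a residue computation that the relevant exponents force $5\mid(2k+1)(2l+1)$. The only cosmetic difference is that the paper phrases the final step via $8m+5=2(2k+1)^{2}+3(2l+1)^{2}$ rather than your equivalent computation $(2u^{2}+3v^{2}-5)/8\equiv 4u^{2}+v^{2}\pmod 5$.
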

\begin{proof}
Note that for any prime $p\ge 3$, we have
\[\binom{p}{k} = \frac{p}{k}\cdot \binom{p-1}{k-1} \equiv 0 \pmod{p}, \quad 1 \le k \le p-1.\]
By the binomial theorem, we have
\[{{(1-x)}^{p}}=1-px+\cdots +p{{x}^{p-1}}-{{x}^{p}}\equiv 1-{{x}^{p}} \pmod{p}.\]
Hence for any integer $a\ge 1$, we have
\[f_{a}^{p}\equiv \prod\limits_{n=1}^{\infty }{{{(1-{{q}^{an}})}^{p}}}\equiv \prod\limits_{n=1}^{\infty }{(1-{{q}^{apn}})}={{f}_{ap}} \pmod{p}.\]
From (\ref{B36n4}) we have
\begin{equation}\label{temp1}
\sum\limits_{n=0}^{\infty }{{{B}_{3}}(6n+4){{q}^{n}}}=24\frac{f_{2}^{8}f_{3}^{3}}{f_{1}^{5}}\equiv 24\frac{{{f}_{10}}}{{{f}_{5}}}\cdot f_{2}^{3}f_{3}^{3} \pmod{120}.
\end{equation}
By Jacobi's identity, we have
\[f_{2}^{3}f_{3}^{3}=\Big(\sum\limits_{k=0}^{\infty }{{{(-1)}^{k}}(2k+1){{q}^{k(k+1)}}}\Big)\Big(\sum\limits_{l=0}^{\infty }{{{(-1)}^{l}}(2l+1){{q}^{3l(l+1)/2}}}\Big).\]
Suppose
\[f_{2}^{3}f_{3}^{3}=\sum\limits_{m=0}^{\infty }{c(m){{q}^{m}}},\]
then
\[c(m)=\sum\limits_{\begin{smallmatrix}
 k(k+1)+3l(l+1)/2=m \\
 k,l\ge 0
\end{smallmatrix}}{{{(-1)}^{k+l}}(2k+1)(2l+1)}.\]
Note that
\[m=k(k+1)+\frac{3l(l+1)}{2}\Leftrightarrow 8m+5=2{{(2k+1)}^{2}}+3{{(2l+1)}^{2}}.\]
For any integer $x$, we have ${{x}^{2}}\equiv 0,1,4$ (mod 5). If $m\equiv 1$ or 4 (mod 5), then at least one of $2k+1$ or $2l+1$ must be divisible by 5. Hence we deduce that
\[c(5n+1)\equiv c(5n+4)\equiv 0 \pmod{5}.\]
By (\ref{temp1}) we have
\[\sum\limits_{n=0}^{\infty }{{{B}_{3}}(6n+4){{q}^{n}}}\equiv 24\frac{{{f}_{10}}}{{{f}_{5}}}\sum\limits_{m=0}^{\infty }{c(m){{q}^{m}}} \pmod{120}.\]
The theorem now follows by comparing the coefficients of ${{q}^{5n+r}}$ ($r\in \{1,4\}$) on both sides of the above identity.
\end{proof}

\begin{theorem}
Let $\omega (n)$ denote the number of representations of a nonnegative integer $n$ in the form $x_{1}^{2}+x_{2}^{2}+x_{3}^{2}+3y_{1}^{2}+3y_{2}^{2}+3y_{3}^{2}$ with ${{x}_{1}},{{x}_{2}},{{x}_{3}},{{y}_{1}},{{y}_{2}},{{y}_{3}}\in \mathbb{Z}$. Then
\[\omega (6n+5)=4{{B}_{3}}(6n+4).\]
\end{theorem}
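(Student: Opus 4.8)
The plan is to determine the generating function of the subsequence $\omega(6n+5)$ and to recognize it as $4$ times the generating function $\sum_{n\ge 0}B_3(6n+4)q^n$ obtained in (\ref{B36n4}). Since $\varphi(q)=\sum_{m\in\mathbb{Z}}q^{m^2}$, the definition of $\omega(n)$ gives immediately
\[
\sum_{n\ge 0}\omega(n)q^n=\varphi(q)^3\varphi(q^3)^3 .
\]
To isolate the exponents $\equiv 5\pmod 6$ I would dissect modulo $3$ first and then modulo $2$.

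For the $3$-dissection, sorting $m$ modulo $3$ gives the elementary splitting $\varphi(q)=\varphi(q^9)+2q\,f(q^3,q^{15})$, in which the two pieces carry exponents $\equiv 0$ and $\equiv 1\pmod 3$ respectively, while $\varphi(q^3)^3$ carries only exponents $\equiv 0\pmod 3$. Cubing $\varphi(q)$ and multiplying by $\varphi(q^3)^3$, the part of $\varphi(q)^3\varphi(q^3)^3$ supported on exponents $\equiv 2\pmod 3$ is the single cross term $12q^2\,\varphi(q^9)\,f(q^3,q^{15})^2\,\varphi(q^3)^3$. Dividing by $q^2$ and replacing $q^3$ by $q$ yields
\[
\sum_{n\ge 0}\omega(3n+2)q^n=12\,\varphi(q)^3\,\varphi(q^3)\,f(q,q^5)^2 .
\]

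Next I would rewrite this as an eta-quotient. By (\ref{phipsi}), $\varphi(q)=f_2^5/(f_1^2f_4^2)$ and $\varphi(q^3)=f_6^5/(f_3^2f_{12}^2)$, and the Jacobi triple product together with $(-q;q^2)_\infty=f_2^2/(f_1f_4)$ yields the classical evaluation $f(q,q^5)=f_2^2f_3f_{12}/(f_1f_4f_6)$. Substituting, the right-hand side collapses to
\[
\sum_{n\ge 0}\omega(3n+2)q^n=12\,\frac{f_2^{19}f_6^{3}}{f_1^{8}f_4^{8}} .
\]
Since $f_2^{19}f_6^{3}/f_4^{8}$ is a series in $q^2$, the only odd-exponent contribution comes from $1/f_1^{8}$, whose odd part is $8q\,f_4^{16}/f_2^{24}$ by (\ref{lemid5}). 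Hence the odd part of the series above is $96\,q\,f_4^{8}f_6^{3}/f_2^{5}$, and since $3(2n+1)+2=6n+5$, dividing by $q$ and replacing $q^2$ by $q$ gives
\[
\sum_{n\ge 0}\omega(6n+5)q^n=96\,\frac{f_2^{8}f_3^{3}}{f_1^{5}} .
\]
Comparing this with $4$ times (\ref{B36n4}), namely $4\sum_{n\ge 0}B_3(6n+4)q^n=96\,f_2^{8}f_3^{3}/f_1^{5}$, and equating coefficients of $q^n$ completes the proof.

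Organized this way the computation is short, and I do not expect a serious obstacle: the only genuinely thoughtful point is the decision to dissect modulo $3$ before modulo $2$, which makes the needed modulo-$3$ piece depend only on the clean splitting of $\varphi(q)$ and so reduces everything to the already-established formula (\ref{B36n4}) for $B_3(6n+4)$. The evaluation $f(q,q^5)=f_2^2f_3f_{12}/(f_1f_4f_6)$ is standard, so no identities beyond those in Lemma \ref{13id} and (\ref{phipsi}) are really needed; if one prefers to avoid it, one can instead $6$-dissect the eta-quotient $f_2^{15}f_6^{15}/(f_1^6f_3^6f_4^6f_{12}^6)$ directly using Lemma \ref{13id}, at the cost of a longer simplification.
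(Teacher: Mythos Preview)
Your proposal is correct and follows essentially the same route as the paper: the paper also writes $\varphi(q)=\varphi(q^{9})+2q\,f_{6}^{2}f_{9}f_{36}/(f_{3}f_{12}f_{18})$ (which is your $\varphi(q^{9})+2qf(q^{3},q^{15})$ in eta-quotient form), extracts the $q^{3n+2}$ part to obtain $\sum_{n\ge 0}\omega(3n+2)q^{n}=12\,f_{2}^{19}f_{6}^{3}/(f_{1}^{8}f_{4}^{8})$, then applies (\ref{lemid5}) to pick off the odd part and compares with (\ref{B36n4}). The only cosmetic difference is that the paper carries the eta-quotient for $f(q,q^{5})$ from the start rather than naming it and evaluating it separately.
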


\begin{proof}
By \cite[p.\ 49, Corollary (i)]{notebook} and Jacobi triple product identity \cite[Theorem 1.3.3]{Berndt}, we can deduce that
\[\varphi (q)=\varphi ({{q}^{9}})+2q\frac{f_{6}^{2}{{f}_{9}}{{f}_{36}}}{{{f}_{3}}{{f}_{12}}{{f}_{18}}}.\]
The generating function of $\omega (n)$ is given by
\begin{equation}\label{wgen}
\sum\limits_{n=0}^{\infty }{\omega (n){{q}^{n}}}={{\varphi }^{3}}(q){{\varphi }^{3}}({{q}^{3}})={{\varphi }^{3}}({{q}^{3}}){{\Big(\varphi ({{q}^{9}})+2q\frac{f_{6}^{2}{{f}_{9}}{{f}_{36}}}{{{f}_{3}}{{f}_{12}}{{f}_{18}}}\Big)}^{3}}.
\end{equation}
Extracting the terms ${{q}^{3n+2}}$ from (\ref{wgen}), dividing by ${{q}^{2}}$, replacing ${{q}^{3}}$ by $q$, we obtain that
\[\sum\limits_{n=0}^{\infty }{\omega (3n+2){{q}^{n}}}=12{{\varphi }^{3}}(q)\varphi ({{q}^{3}})\frac{f_{2}^{4}f_{3}^{2}f_{12}^{2}}{f_{1}^{2}f_{4}^{2}f_{6}^{2}}=12\frac{f_{2}^{19}f_{6}^{3}}{f_{1}^{8}f_{4}^{8}}.\]
By (\ref{lemid5}) we get
\begin{equation}\label{w3n2}
\sum\limits_{n=0}^{\infty }{\omega (3n+2){{q}^{n}}}=12\frac{f_{2}^{19}f_{6}^{3}}{f_{4}^{8}}\Big(\frac{f_{4}^{28}}{f_{2}^{28}f_{8}^{8}}+8q\frac{f_{4}^{16}}{f_{2}^{24}}+16{{q}^{2}}\frac{f_{4}^{4}f_{8}^{8}}{f_{2}^{20}}\Big).
\end{equation}
If we extract the terms involving ${{q}^{2n+1}}$, divide by $q$ and replace ${{q}^{2}}$ by $q$,  we obtain
\begin{equation}\label{w6n5}
\sum\limits_{n=0}^{\infty }{\omega (6n+5){{q}^{n}}}=96\frac{f_{2}^{8}f_{3}^{3}}{f_{1}^{5}}.
\end{equation}
Comparing (\ref{w6n5}) with (\ref{B36n4}), we complete our proof.
\end{proof}

\begin{theorem}
For any nonnegative integer $n$, we have
\[\omega (12n+2)=12{{B}_{3}}(6n).\]
\end{theorem}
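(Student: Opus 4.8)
The plan is to prove that the power series $\sum_{n\ge 0}\omega(12n+2)q^n$ and $12\sum_{n\ge 0}B_3(6n)q^n$ are identical. The latter is already at hand --- it is $12$ times the right-hand side of (\ref{B36n}) --- so it suffices to compute $\sum_{n\ge 0}\omega(12n+2)q^n$ as an explicit eta-quotient and then to check that the two expressions coincide.

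First I would compute $\sum_{n\ge 0}\omega(6n+2)q^n$. Starting from (\ref{w3n2}), the $2$-dissection of $\sum_{n\ge 0}\omega(3n+2)q^n$, and using that $f_2,f_4,f_6,f_8$ are all power series in $q^2$, I collect the terms of (\ref{w3n2}) that carry an even power of $q$; this produces $\sum_{n\ge 0}\omega(6n+2)q^{2n}$, and replacing $q^2$ by $q$ gives
\[
\sum_{n\ge 0}\omega(6n+2)q^n=12\Big(\frac{f_2^{20}f_3^{3}}{f_1^{9}f_4^{8}}+16q\,\frac{f_3^{3}f_4^{8}}{f_1 f_2^{4}}\Big).
\]
This is the even-index counterpart of the extraction that produced (\ref{w6n5}) in the proof that $\omega(6n+5)=4B_3(6n+4)$. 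Next I would carry out a second $2$-dissection: writing $\frac{f_3^{3}}{f_1^{9}}=\frac{f_3^{3}}{f_1}\cdot\frac{1}{f_1^{8}}$, I substitute (\ref{lemid1}) and (\ref{lemid5}) into the first summand and (\ref{lemid1}) into the second, retain only the terms with an even power of $q$, and replace $q^2$ by $q$. After simplification this should give
\[
\sum_{n\ge 0}\omega(12n+2)q^n=12\Big(\frac{f_2^{23}f_3^{2}}{f_1^{10}f_4^{8}f_6}+16q\,\frac{f_3^{2}f_4^{8}}{f_1^{2}f_2 f_6}+24q\,\frac{f_2^{7}f_6^{3}}{f_1^{4}}\Big).
\]

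Comparing this with $12$ times (\ref{B36n}), dividing by $12$, and cancelling the common term $16q\,f_2^{7}f_6^{3}/f_1^{4}$, the assertion $\omega(12n+2)=12B_3(6n)$ is seen to be equivalent to the identity
\[
\frac{f_2^{23}f_3^{2}}{f_1^{10}f_4^{8}f_6}+16q\,\frac{f_3^{2}f_4^{8}}{f_1^{2}f_2 f_6}+8q\,\frac{f_2^{7}f_6^{3}}{f_1^{4}}=\frac{f_2^{10}f_3^{9}}{f_1^{7}f_6^{6}}+27q\,\frac{f_2^{2}f_3^{5}f_6^{2}}{f_1^{3}}.
\]
By (\ref{phipsi}) the two $f_4$-containing terms on the left combine as $\frac{f_2^{3}f_3^{2}}{f_1^{2}f_6}\big(\varphi(q)^{4}+16q\,\psi(q^{2})^{4}\big)$, so the content of the identity is that this expression admits an $f_4$-free closed form. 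I would prove it by clearing denominators and then passing to the product notation $[a;q]_\infty$, invoking the Gasper--Rahman identity (\ref{keyid}) exactly as in the proof of (\ref{thm1id1}) and, if needed, combining it with (\ref{thm1id1})--(\ref{thm1id2}).

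The two $2$-dissections are mechanical once Lemma \ref{13id} and (\ref{thm1id1})--(\ref{thm1id2}) are available; the genuine obstacle is the concluding theta-function identity --- i.e.\ matching the two generating functions --- and in particular finding an argument for it that grows out of the material already developed in the paper rather than a fresh, ad hoc manipulation.
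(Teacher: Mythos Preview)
Your first $2$-dissection is exactly what the paper does (it is the paper's identity (\ref{w6n2})). The divergence is at the second step: you factor $\dfrac{f_3^3}{f_1^9}=\dfrac{f_3^3}{f_1}\cdot\dfrac{1}{f_1^8}$ and invoke (\ref{lemid1}) and (\ref{lemid5}), whereas the paper writes $\dfrac{f_3^3}{f_1^9}=\Big(\dfrac{f_3}{f_1^3}\Big)^3$ and cubes (\ref{lemid2}) (this is the identity labeled (\ref{f3319}) in the paper). That choice is not cosmetic. With (\ref{lemid2}) cubed, extracting the even part of (\ref{w6n2}) and replacing $q^2$ by $q$ gives
\[
\sum_{n\ge 0}\omega(12n+2)q^n=12\Big(\frac{f_2^{10}f_3^{9}}{f_1^{7}f_6^{6}}+16q\,\frac{f_2^{7}f_6^{3}}{f_1^{4}}+27q\,\frac{f_2^{2}f_3^{5}f_6^{2}}{f_1^{3}}\Big),
\]
which is \emph{identically} $12$ times (\ref{B36n}); no residual theta identity is needed. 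The reason this works is that (\ref{B36n}) itself was obtained via (\ref{lemid2}) (through (\ref{f153})), so dissecting the $\omega$-side with the same lemma forces the two expressions into the same shape.

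Your route, by contrast, manufactures an extra identity
\[
\frac{f_2^{3}f_3^{2}}{f_1^{2}f_6}\big(\varphi(q)^{4}+16q\,\psi(q^{2})^{4}\big)+8q\,\frac{f_2^{7}f_6^{3}}{f_1^{4}}=\frac{f_2^{10}f_3^{9}}{f_1^{7}f_6^{6}}+27q\,\frac{f_2^{2}f_3^{5}f_6^{2}}{f_1^{3}}
\]
which you leave as ``the genuine obstacle'' and only propose to attack with (\ref{keyid}). That is a real gap: you have not proved it, and it is not at all clear that a single application of (\ref{keyid}) in the style of the proof of (\ref{thm1id1}) will do the job (note that $\varphi(q)^{4}+16q\,\psi(q^{2})^{4}=2\varphi(q)^{4}-\varphi(-q)^{4}$ still carries $f_4$). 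The fix is simply to switch the decomposition of $f_3^3/f_1^9$ to $\big(f_3/f_1^3\big)^3$; then the obstacle disappears.
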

\begin{proof}
If we extract the terms involving ${{q}^{2n}}$ in (\ref{w3n2})  and then replace ${{q}^{2}}$ by $q$, we get
\begin{equation}\label{w6n2}
\sum\limits_{n=0}^{\infty }{\omega (6n+2){{q}^{n}}}=12\Big(\frac{f_{2}^{20}f_{3}^{3}}{f_{1}^{9}f_{4}^{8}}+16q\frac{f_{3}^{3}f_{4}^{8}}{{{f}_{1}}f_{2}^{4}}\Big).
\end{equation}
By (\ref{lemid2}) we obtain
\begin{equation}\label{f3319}
\begin{split}
\frac{f_{3}^{3}}{f_{1}^{9}}&={{\Big(\frac{f_{4}^{6}f_{6}^{3}}{f_{2}^{9}f_{12}^{2}}+3q\frac{f_{4}^{2}{{f}_{6}}f_{12}^{2}}{f_{2}^{7}})}^{3}}\\
&=\Big(\frac{f_{4}^{18}f_{6}^{9}}{f_{2}^{27}f_{12}^{6}}+27{{q}^{2}}\frac{f_{4}^{10}f_{6}^{5}f_{12}^{2}}{f_{2}^{23}}\Big)+9q\Big(\frac{f_{4}^{14}f_{6}^{7}}{f_{2}^{25}f_{12}^{2}}+3{{q}^{2}}\frac{f_{4}^{6}f_{6}^{3}f_{12}^{6}}{f_{2}^{21}}\Big).
\end{split}
\end{equation}
Substituting (\ref{lemid1}) and (\ref{f3319}) into (\ref{w6n2}), extracting the terms involving ${{q}^{2n}}$ and then replacing ${{q}^{2}}$ by $q$, we deduce that
\begin{equation}\label{w12n2}
\sum\limits_{n=0}^{\infty }{\omega (12n+2){{q}^{n}}}=12\Big(\frac{f_{2}^{10}f_{3}^{9}}{f_{1}^{7}f_{6}^{6}}+16q\frac{f_{2}^{7}f_{6}^{3}}{f_{1}^{4}}+27q\frac{f_{2}^{2}f_{3}^{5}f_{6}^{2}}{f_{1}^{3}}\Big).
\end{equation}

Comparing (\ref{B36n}) with (\ref{w12n2}), we deduce that $\omega (12n+2)=12{{B}_{3}}(6n).$
\end{proof}

\begin{theorem}
For any nonnegative integer $n$, we have
\[\omega (12n+10)=6{{B}_{3}}(6n+4).\]
\end{theorem}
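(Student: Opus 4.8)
The plan is to mimic the structure of the preceding two theorems, working from the 3-dissection of $\sum \omega(3n+2)q^n$ already established in \eqref{w3n2}. Since $12n+10 = 6(2n+1)+4$, I need the odd-indexed part $\sum \omega(6n+4)q^n$ (not the even part used for $\omega(12n+2)$), so the first step is to extract the terms involving $q^{2n+1}$ from \eqref{w3n2}, divide by $q$, and replace $q^2$ by $q$. This produces a formula for $\sum \omega(6n+4)q^n$ as $12$ times a sum of eta-quotients in $f_1,f_2,f_3,f_4$; roughly, the $8q f_4^{16}/f_2^{24}$ middle term of \eqref{lemid5} contributes to the odd part.

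Next I would perform a further 2-dissection on $\sum \omega(6n+4)q^n$ to isolate $\sum \omega(12n+10)q^n$: extract the terms $q^{2n+1}$, divide by $q$, and replace $q^2$ by $q$. To do this I need to 2-dissect the eta-quotients obtained in the previous step, which is exactly the role of Lemma \ref{13id} — in particular \eqref{lemid1}, \eqref{lemid2}, and the reciprocals \eqref{lemid4}, together with the cube $f_3^3/f_1^9$ computed in \eqref{f3319} and the quantity $f_3^4/f_1^4$ from \eqref{f134}. After collecting the odd-power coefficients and replacing $q^2$ by $q$, I expect to arrive at an expression for $\sum_{n\ge 0}\omega(12n+10)q^n$. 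The goal is then to simplify it, using the product identities \eqref{thm1id1}--\eqref{thm1id2} as needed, down to a constant multiple of $f_2^8 f_3^3/f_1^5$.

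Finally, comparing the resulting generating function with \eqref{B36n4}, which says $\sum B_3(6n+4)q^n = 24\, f_2^8 f_3^3/f_1^5$, should give $\sum \omega(12n+10)q^n = 6 \cdot 24\, f_2^8 f_3^3/f_1^5 \cdot \tfrac{1}{24}$ — i.e. the coefficient will be $144$, half of the $96$ that appeared in \eqref{w6n5} for $\omega(6n+5)$ — yielding $\omega(12n+10) = 6 B_3(6n+4)$. As a shortcut one might instead try to 2-dissect \eqref{w6n5} directly: since $\omega(6n+5)$ with $n\mapsto 2n$ gives $\omega(12n+5)$ and $n \mapsto 2n+1$ gives $\omega(12n+11)$, that route reaches the wrong residues, so the honest path really does go through the $q^{2n+1}$ part of \eqref{w3n2}. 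The main obstacle will be the bookkeeping in the double 2-dissection: each eta-quotient must be expanded via Lemma \ref{13id}, the correct parity of the exponent of $q$ tracked through two successive extractions, and the many terms combined — it is entirely routine but error-prone, and the one genuinely non-mechanical point is recognizing which combination of \eqref{thm1id1}--\eqref{thm1id2} collapses the final sum to the single term $f_2^8 f_3^3/f_1^5$.
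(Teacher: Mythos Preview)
There is a genuine arithmetic slip at the very first step. You propose to reach $\sum_n \omega(6n+4)q^n$ by extracting the $q^{2n+1}$ part of \eqref{w3n2}. But \eqref{w3n2} is the generating function of $\omega(3n+2)$, so its odd part picks out $\omega\!\bigl(3(2n+1)+2\bigr)=\omega(6n+5)$ --- exactly the series \eqref{w6n5} that you yourself later dismiss as hitting the wrong residues. The point is that $12n+10\equiv 1\pmod 3$, not $2\pmod 3$; hence $\omega(12n+10)$ lives in the $3n+1$ sector and the correct starting point is \eqref{w3n1}, namely $\sum_n \omega(3n+1)q^n$, not \eqref{w3n2}.

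Once you start from \eqref{w3n1}, the paper's route is essentially what you sketch for the second half: write $\sum_n \omega(3n+1)q^n = 6\,f_2^{17}f_6^{9}/(f_1^{7}f_3^{3}f_4^{7}f_{12}^{3})$, then use \eqref{lemid4} for $f_1/f_3^3$ and \eqref{lemid5} for $1/f_1^8$ to 2-dissect; the odd part gives \eqref{w6n4} for $\sum_n \omega(6n+4)q^n$. A second 2-dissection via \eqref{lemid5} and the square \eqref{f3216} of \eqref{lemid2} then yields $\sum_n \omega(12n+10)q^n = 144\,f_2^{8}f_3^{3}/f_1^{5}$ directly, with no need for \eqref{thm1id1}--\eqref{thm1id2}; comparison with \eqref{B36n4} finishes the proof. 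So your overall strategy (double 2-dissection, then compare with \eqref{B36n4}) is right, but it has to be seeded with the $3n+1$ component rather than the $3n+2$ one.
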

\begin{proof}
Extracting the terms involving ${{q}^{3n+1}}$ in (\ref{wgen}), dividing both sides by $q$ and replacing ${{q}^{3}}$ by $q$, we get
\[\sum\limits_{n=0}^{\infty }{\omega (3n+1){{q}^{n}}}=6{{\varphi }^{3}}(q)\varphi {{({{q}^{3}})}^{2}}\cdot \frac{f_{2}^{2}{{f}_{3}}{{f}_{12}}}{{{f}_{1}}{{f}_{4}}{{f}_{6}}}.\]
Substituting (\ref{phipsi}) into the above identity, we get
\begin{equation}\label{w3n1}
\sum\limits_{n=0}^{\infty }{\omega (3n+1){{q}^{n}}}=6\frac{f_{2}^{15}}{f_{1}^{6}f_{4}^{6}}\cdot \frac{f_{6}^{10}}{f_{3}^{4}f_{12}^{4}}\cdot \frac{f_{2}^{2}{{f}_{3}}{{f}_{12}}}{{{f}_{1}}{{f}_{4}}{{f}_{6}}}=6\frac{f_{2}^{17}f_{6}^{9}}{f_{1}^{7}f_{3}^{3}f_{4}^{7}f_{12}^{3}}.
\end{equation}
Substituting (\ref{lemid4}) and (\ref{lemid5}) into (\ref{w3n1}), we obtain
\begin{displaymath}
\begin{split}
  \sum\limits_{n=0}^{\infty }{\omega (3n+1){{q}^{n}}}& =6\frac{f_{2}^{17}f_{6}^{9}}{f_{4}^{7}f_{12}^{3}}\cdot \frac{1}{f_{1}^{8}}\cdot \frac{{{f}_{1}}}{f_{3}^{3}} \\
 & =6\frac{f_{2}^{17}f_{6}^{9}}{f_{4}^{7}f_{12}^{3}}\cdot \Big(\frac{f_{4}^{28}}{f_{2}^{28}f_{8}^{8}}+8q\frac{f_{4}^{16}}{f_{2}^{24}}+16{{q}^{2}}\frac{f_{4}^{4}f_{8}^{8}}{f_{2}^{20}}\Big)\Big(\frac{{{f}_{2}}f_{4}^{2}f_{12}^{2}}{f_{6}^{7}}-q\frac{f_{2}^{3}f_{12}^{6}}{f_{4}^{2}f_{6}^{9}}\Big). \\
\end{split}
\end{displaymath}
Extracting the term involving ${{q}^{2n+1}}$, dividing by $q$ and then replacing ${{q}^{2}}$ by $q$, we get
\begin{equation}\label{w6n4}
\sum\limits_{n=0}^{\infty }{\omega (6n+4){{q}^{n}}}=6\Big(8\frac{f_{3}^{2}f_{2}^{11}}{f_{1}^{6}{{f}_{6}}}-\frac{f_{2}^{19}f_{6}^{3}}{f_{1}^{8}f_{4}^{8}}-16q\frac{f_{4}^{8}f_{6}^{3}}{f_{2}^{5}}\Big).
\end{equation}
By (\ref{lemid2}) we have
\begin{equation}\label{f3216}
\frac{f_{3}^{2}}{f_{1}^{6}}={{\Big(\frac{f_{4}^{6}f_{6}^{3}}{f_{2}^{9}f_{12}^{2}}+3q\frac{f_{4}^{2}{{f}_{6}}f_{12}^{2}}{f_{2}^{7}}\Big)}^{2}}=\frac{f_{4}^{12}f_{6}^{6}}{f_{2}^{18}f_{12}^{4}}+6q\frac{f_{4}^{8}f_{6}^{4}}{f_{2}^{16}}+9{{q}^{2}}\frac{f_{4}^{4}f_{6}^{2}f_{12}^{4}}{f_{2}^{14}}.
\end{equation}
Substituting (\ref{lemid5}) and (\ref{f3216}) into (\ref{w6n4}), extracting the terms involving ${{q}^{2n+1}}$, dividing by $q$ and replacing ${{q}^{2}}$ by $q$, we obtain
\[\sum\limits_{n=0}^{\infty }{\omega (12n+10){{q}^{n}}}=144\frac{f_{2}^{8}f_{3}^{3}}{f_{1}^{5}}.\]
Comparing this identity with (\ref{B36n4}), we deduce that $\omega (12n+10)=6{{B}_{3}}(6n+4).$
\end{proof}

\section{Concluding Remarks}
Let $A_{3}^{(k)}(n)$  denote the number of partition $k$-tuples of $n$
 with 3-cores. In particular, $A_{3}^{(1)}(n)=a_{3}(n)$, $A_{3}^{(2)}(n)=A_{3}(n)$ appeared in the existing literature (see \cite{BaruahJNT,Nath,Hirschhorn,Lin}) and $A_{3}^{(3)}(n)=B_{3}(n)$ in this paper.

 Again, let  ${{\omega }^{(k)}}(n)$ denote the number of representations of a nonnegative integer $n$ in the form
 \[n=x_{1}^{2}+\cdots +x_{k}^{2}+3(y_{1}^{2}+\cdots +y_{k}^{2}), \quad {{x}_{i}},{{y}_{i}}\in \mathbb{Z},\quad i=1,2,\cdots ,k.\]
It is easy to see that the generating function of $A_{3}^{(k)}(n)$ and ${{\omega }^{(k)}}(n)$ are given by
\[\sum\limits_{n=0}^{\infty }{A_{3}^{(k)}(n){{q}^{n}}}=\frac{f_{3}^{3k}}{f_{1}^{k}}, \quad \text{\rm{and}}\quad \sum\limits_{n=0}^{\infty }{{{\omega }^{(k)}}(n){{q}^{n}}}={{\varphi }^{k}}(q){{\varphi }^{k}}({{q}^{3}})\]
respectively.

 From the existing papers and our work, we know many arithmetic identities about $A_{3}^{(k)}(n)$ for $k=1,2,3$. Meanwhile, we have seen some relations between  $A_{3}^{(k)}(n)$ and ${{\omega }^{(k)}}(n)$, such as
\begin{displaymath}
\begin{split}
   {{\omega }^{(1)}}(12n+4)&=6A_{3}^{(1)}(n), \\
  {{\omega }^{(2)}}(6n+5)&=12A_{3}^{(2)}(2n+1), \\
  {{\omega }^{(3)}}(6n+5)&=4A_{3}^{(3)}(6n+4). \\
\end{split}
\end{displaymath}
Based on these facts and observations, we would like to ask the following two questions.

\textbf{Question 1}. Can we find some arithmetic identities involving $A_{3}^{(k)}(n)$ for all $k$?

\textbf{Question 2}. Can we find some arithmetic relations between $A_{3}^{(k)}(n)$ and ${{\omega }^{(k)}}(n)$ for all $k$?

To answer these questions, we believe that one may need to develop some new methods and ideas.

\end{document}